\documentclass[10pt]{article}

\usepackage{amssymb,amsmath,amsthm,amsfonts}
\usepackage{graphicx}
\usepackage{comment}
\graphicspath{{./pics/}}
\usepackage[colorlinks,linktocpage,linkcolor=blue]{hyperref}
\usepackage{algorithm,algorithmic}
\usepackage{color}
\usepackage[small,bf]{caption}

\newtheorem{thm}{Theorem}[section]

\newtheorem{lem}{Lemma}[section]

\newtheorem{cor}{Corollary}[section]

\setlength\topmargin{-1cm} \setlength\textheight{220mm}
\setlength\oddsidemargin{0mm}
\setlength\evensidemargin\oddsidemargin \setlength\textwidth{160mm}
\setlength\baselineskip{18pt}

\title{An Inverse Sturm-Liouville Problem with a Fractional Derivative}
\author{Bangti Jin\footnote{Department of Mathematics and Institute for Applied Mathematics and Computational
Science, Texas A\&M University, College Station, Texas 77843-3368, USA (btjin,rundell@math.tamu.edu)}
\and William Rundell\footnotemark[1]
}
\date{\today}

\begin{document}
\maketitle
\begin{abstract}
In this paper, we numerically investigate an inverse problem of recovering the potential
term in a fractional Sturm-Liouville problem from one spectrum. The qualitative behaviors
of the eigenvalues and eigenfunctions are discussed, and numerical reconstructions of the
potential with a Newton method from finite spectral data are presented. Surprisingly, it
allows very satisfactory reconstructions for both smooth and discontinuous potentials,
provided that the order $\alpha\in(1,2)$ of fractional derivative is sufficiently away
from $2$.
\end{abstract}

\section{Introduction}

We consider the Sturm-Liouville problem (SLP) for the fractional differential equation
\begin{equation}\label{eqn:slp}
  \left\{\begin{aligned}
    &- D_0^\alpha u + qu = \lambda u\quad 0<x<1,\\
    &u(0)=u(1)=0,
  \end{aligned}\right.
\end{equation}
where the (left-sided) Caputo fractional derivative $D_0^\alpha$ of order
$\alpha\in(1,2)$ is defined by
\begin{equation*}
   D_0^\alpha u(x) = \frac{1}{\Gamma(2-\alpha)}\int_0^x\frac{u''(t)}{(x-t)^{\alpha-1}}dt,
\end{equation*}
where $\Gamma(\cdot)$ refers to the standard Gamma function. We shall normalize the
eigenfunction $u$ by $u'(0)=1$. As the exponent $\alpha$ tends to $2$, problem
\eqref{eqn:slp} reduces to the classical SLP \cite[Thm.~2.1,
pp.~92]{KilbasSrivastavaTrujillo:2006}. The fractional SLP \eqref{eqn:slp} is of immense
interest for several reasons.

First, it arises naturally in the analysis of (spatially) fractional wave equations when
applying Fourier transform. Fractional wave equations are often used to faithfully
capture dynamical behaviors of amorphous materials, e.g., polymer and porous media; see
the comprehensive reviews \cite{MetzlerNonnenmacher:2002,RossikhinShitikova:2010} and
references therein. Hence, a better understanding of the SLP \eqref{eqn:slp} would shed
valuable insight into the underlying physics of these phenological models.

Second, diffusion is one of the most prominent transport mechanisms found in nature. At a
microscopic level, it is the result of the random motion of individual particles, and the
use of the Laplace operator in the canonical model rests on a Gaussian process assumption
on the random motion. Over the last twenty years a wide body of the literature has shown
that anomalous diffusion in which the mean square variance grows faster (superdiffusion)
or slower (subdiffusion) than in a Gaussian process offers a superior fit to experimental
data (see the reviews \cite{SchlesingerWestKlafter:1987,SchneiderWyss:1989,
BouchaudGeorges:1990,MeerschaertBensonBaumer:1999,MetzlerKlafter:2000}). This is
particularly true in materials with memory, e.g., viscoelastic materials. This anomalous
diffusion can be in either the spatial or temporal variables and is typically reflected
in a fractional order derivative. In either case, separation of variables leads to a
(fractional order) ordinary differential equation with a set of constants, i.e., the
eigenvalues.

Third, it serves as a natural departure from the classical SLP ($\alpha=2$), for which
there is a wealth of profound theoretical results for both the forward and inverse
problems (see \cite[Chap.~3]{ChadanColtonPaivrintaRundell:1997} for an overview). It is
clearly interesting to see how these results might be translated to the new scenario both
from a mathematical perspective and for what this tells us about the underlying physics.

Suppose that we wish to solve the inverse problem of recovering the potential $q(x)$ from one
spectrum $\{\lambda_k\}$. It is well known that for the classical SLP ($\alpha=2$), this
is in general insufficient. One spectrum completely determines the potential only if
additional a priori information is available, for example, if it is known to be symmetric
about the midpoint of the interval or is given on one half of the interval and has to be
determined on the other half. In case of a general potential and fixed boundary
conditions at the endpoints one requires additional information in the form of a second
sequence. This can be a second spectrum arising from different boundary conditions,
specifying both the eigenfunction and its derivative at an endpoint, or giving the energy
associated with each frequency - the so-called norming constants (see
\cite[Chap.~3]{ChadanColtonPaivrintaRundell:1997}). However there are many situations
where this additional knowledge cannot be obtained, and one natural question is whether
the same requirements also hold for the fractional case, \eqref{eqn:slp}.

A number of efficient reconstruction techniques, including an iterative method involving
an associated hyperbolic equation \cite{RundellSacks:1992}, variational methods
\cite{Rohrl:2005}, and linearization methods using an iterated Newton method based on the
coefficient-to-data map \cite{LowePilantRundell:1992}, have been developed for the
classical case.

The main result of this paper is strong evidence that the fractional SLP \eqref{eqn:slp}
has very different properties and extensive numerical results lead to the conclusion that
one spectrum completely determines a general potential $q$ for $1<\alpha<2$, which
contrasts sharply with the classical case of $\alpha=2$. In this paper we will also deal
with reconstruction methods and follow the Newton scheme as the method of
choice.

We note that despite the extensive literature on the forward problem on fractional
ordinary/partial differential equations (see
\cite{Djrbashian:1993,Podlubny:1999,KilbasSrivastavaTrujillo:2006,LinXu:2007,BrunnerLingYamamoto:2010,SakamotoYamamoto:2011}
for an incomplete list) and their diverse physical and engineering applications
\cite{MetzlerKlafter:2004}, the research on relevant inverse problems remains very
scarce. Recently, several works on unique identifiability and numerical methods for
inverse problems for time-fractional diffusion/wave equations
\cite{ChengNakagawaYamamotoYamazaki:2009,LiuYamamoto:2010, ZhengWei:2011} have appeared
and some unusual phenomena over the classical case has been observed.

The paper is structured as follows. In Section \ref{sec:slp} we discuss results about the
Sturm-Liouville theory for fractional differential operators. In particular, we develop
an asymptotic formula for the spectrum and indicate the main properties of the
eigenfunctions. It turns out that the spectral values are complex and so we have both a
real and imaginary parts to the eigenfunctions and it is this feature that allows the
identifiability from a single, albeit complex spectrum. In Section \ref{sec:islp} we
numerically investigate the inverse SLP with a Newton-type method. Numerical results for
both smooth and discontinuous potentials clearly illustrate the phenomenon of recovering
a general potential for \eqref{eqn:slp} from one spectrum. We conclude the paper with
some future research problems in Section \ref{sec:concl}.

\section{Sturm-Liouville theory}\label{sec:slp}
In this section, we discuss qualitative behaviors of the spectrum to the SLP
\eqref{eqn:slp}. We first discuss the case of a zero potential, which unlike the situation
with $\alpha=2$, is quite nontrivial, and then turn to the case of a nonzero potential.

\subsection{Differential operator $-D_0^\alpha$}
In this part, we collect known results about the spectra of the fractional differential
operators $-D_0^\alpha(1<\alpha<2)$. We shall use the two-parameter Mittag-Leffler
function $E_{\alpha,\beta}(z)$ defined by
\begin{equation*}
  E_{\alpha,\beta}(z)=\sum_{k=0}^\infty \frac{z^k}{\Gamma(\beta+\alpha k)}.
\end{equation*}

The two particular versions of Mittag-Leffler function relevant to problem
\eqref{eqn:slp} are $E_{\alpha,2}(z)$ and $E_{\alpha,\alpha}(z)$. The former occurs in
the form $xE_{\alpha,2}(-\lambda x^\alpha)$ as the eigenfunctions of \eqref{eqn:slp} with
$q=0$, while the second arises in the kernel of the one-sided Green's function for
\eqref{eqn:slp} with $q=0$. These functions will also appear later in the Jacobian matrix
coming from the linearized problem. Further, the Dirichlet eigenvalues of the operator
$D_0^\alpha$ coincide with zeros of $E_{\alpha,2}(z)$ \cite[Thm. 4]{Nahusev:1977}.

If $0<\alpha<2$ and $\mu\in (\frac{\alpha\pi}{2},\min(\pi,\alpha\pi))$, then the function
$E_{\alpha,\beta}(z)$ has the following exponential asymptotic expansion (see e.g.,
\cite[Thm. 1.3-4, pp.~5]{Djrbashian:1993}, \cite[eq. (1.8.27),
p.~43]{KilbasSrivastavaTrujillo:2006}): as $|z|\rightarrow\infty$
\begin{equation}\label{eqn:asympt}
  E_{\alpha,\beta}(z) = \left\{\begin{aligned} &\frac{1}{\alpha}z^{\frac{1-\beta}{\alpha}}\exp(z^\frac{1}{\alpha})-
     \sum_{k=1}^N\frac{1}{\Gamma(\beta-\alpha k)}\frac{1}{z^k}+O\left(\frac{1}{z^{N+1}}\right),  & |\arg(z)|\leq \mu,\\
     &-\sum_{k=1}^N\frac{1}{\Gamma(\beta-\alpha k)}\frac{1}{z^k}+O\left(\frac{1}{z^{N+1}}\right), & \mu\leq |\arg(z)|\leq\pi.
     \end{aligned}\right.
\end{equation}

The fractional SLP \eqref{eqn:slp} was studied earlier \cite{Djrbashian:1970,
Nahusev:1977}. In \cite{Djrbashian:1970}, the existence of a solution to such boundary
value problem was established, and a certain biorthogonal system was constructed from the
eigenfunctions, see also \cite[Chaps.~10-12]{Djrbashian:1993}. In \cite{Nahusev:1977},
the aforementioned relation between eigenvalues and zeros of Mittag-Leffler function was
shown. The distribution of the zeros of the function $E_{\alpha,\beta}(z)$ is of
independent interest \cite[Chap. 1]{Djrbashian:1993} \cite{Sedletskii:1994,Popov:2008}.
The next result \cite[Lemma~1.4-2, pp.~7]{Djrbashian:1993} represents one of the main
results. For the sake of completeness, we sketch the proof.
\begin{thm}\label{thm:asympt}
For all sufficiently large $n$ (in absolute value), the zeros $\{z_n\}$ of the
Mittag-Leffler function $E_{\alpha,2}(z)$ are simple, and have the following asymptotics
\begin{equation*}
   z_n^{\frac{1}{\alpha}} = 2n\pi \mathrm{i} - (\alpha-1)\left(\ln 2\pi |n| +\frac{\pi}{2}
   \mathrm{sign}(n)\,\mathrm{i}\right) + \ln \frac{\alpha}{\Gamma(2-\alpha)} + d_n,
\end{equation*}
where the remainder $d_n$ is $O\left(\frac{\ln |n|}{|n|}\right)$.
\end{thm}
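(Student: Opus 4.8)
The plan is to locate the large zeros by substituting $w=z^{\frac1\alpha}$ and balancing the two leading terms of the asymptotic expansion \eqref{eqn:asympt}. First I would note that for $\alpha\in(1,2)$ one has $\Gamma(2-\alpha)>0$, so $\ln\frac{\alpha}{\Gamma(2-\alpha)}$ is a well-defined real number, and that since the Taylor coefficients of $E_{\alpha,2}$ are real its zeros occur in conjugate pairs, which accounts for the $\mathrm{sign}(n)$ in the statement. Taking $\beta=2$ and $N=1$ in \eqref{eqn:asympt} gives, in the sector $|\arg z|\le\mu$,
\begin{equation*}
  E_{\alpha,2}(z)=\frac1\alpha z^{-\frac1\alpha}e^{z^{1/\alpha}}-\frac{1}{\Gamma(2-\alpha)}\frac1z+O(z^{-2}),
\end{equation*}
and the condition $E_{\alpha,2}(z)=0$ forces the exponential term to balance the algebraic one. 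I would argue that all sufficiently large zeros lie in this sector: the second line of \eqref{eqn:asympt} shows that for $\mu\le|\arg z|\le\pi$ one has $E_{\alpha,2}(z)\sim-\frac{1}{\Gamma(2-\alpha)z}\ne0$, so no large zeros can occur there and the reduction is justified.

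Writing the balance in terms of $w=z^{1/\alpha}$ (so $z^{-1/\alpha}=w^{-1}$ and $z^{-1}=w^{-\alpha}$) turns $E_{\alpha,2}(z)=0$ into
\begin{equation*}
  e^{w}=\frac{\alpha}{\Gamma(2-\alpha)}\,w^{-(\alpha-1)}\bigl(1+O(|w|^{-1})\bigr).
\end{equation*}
Taking logarithms and choosing the branch dictated by the integer index $n$ yields
\begin{equation*}
  w=2n\pi\mathrm{i}-(\alpha-1)\ln w+\ln\frac{\alpha}{\Gamma(2-\alpha)}+O(|w|^{-1}).
\end{equation*}
I would then solve this fixed-point equation by bootstrapping. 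The leading balance is $w=2n\pi\mathrm{i}+O(\ln|n|)$, whence $|w|=2\pi|n|\bigl(1+O(\ln|n|/|n|)\bigr)$ and $\arg w=\frac{\pi}{2}\mathrm{sign}(n)+O(\ln|n|/|n|)$; substituting $\ln w=\ln(2\pi|n|)+\frac{\pi}{2}\mathrm{sign}(n)\mathrm{i}+O(\ln|n|/|n|)$ back reproduces exactly the claimed expansion for $z_n^{1/\alpha}=w$, with the remainder $d_n=O(\ln|n|/|n|)$ absorbing both the $O(|w|^{-1})$ term and the error in approximating $\ln w$.

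To make this rigorous, and in particular to guarantee that each large $n$ supplies exactly one zero with none missed or counted twice, I would invoke Rouch\'e's theorem. Let $w_n^{(0)}$ denote the approximate root obtained above; since consecutive roots are separated by roughly $2\pi$ in the $w$-plane, one can place disjoint circles of fixed radius about each $w_n^{(0)}$ and compare $E_{\alpha,2}$ against its leading term $\frac1\alpha z^{-1/\alpha}e^{z^{1/\alpha}}$ on these contours, using \eqref{eqn:asympt} to bound the remainder. I expect this step to be the main obstacle: one must estimate the remainder in \eqref{eqn:asympt} uniformly on the contours and verify the strict Rouch\'e inequality there, which requires care precisely because the exponential and algebraic terms are of comparable size near the roots.

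Finally, simplicity follows from showing $E_{\alpha,2}'(z_n)\ne0$. Differentiating the asymptotic expansion, the dominant contribution to $E_{\alpha,2}'(z)$ is $\frac{1}{\alpha^2}z^{-1}e^{z^{1/\alpha}}$; at an approximate zero the exponential equals a nonzero algebraic quantity (namely $\tfrac{\alpha}{\Gamma(2-\alpha)}z^{1/\alpha-1}$), so this term does not vanish and dominates the lower-order terms for large $|z|$. Hence $E_{\alpha,2}'(z_n)\ne0$ and the zeros are simple.
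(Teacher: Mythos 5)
Your proposal follows essentially the same route as the paper's sketch: take $N=1$ in \eqref{eqn:asympt}, reduce $E_{\alpha,2}(z)=0$ to the exponential equation $e^{w}w^{\alpha-1}=\tfrac{\alpha}{\Gamma(2-\alpha)}(1+o(1))$ in the variable $w=z^{1/\alpha}$, take logarithms on the branch indexed by $n$, and bootstrap $\ln w=\ln(2\pi|n|)+\tfrac{\pi}{2}\mathrm{sign}(n)\,\mathrm{i}+O(\ln|n|/|n|)$ to obtain the stated expansion. The extra material you supply --- ruling out large zeros in the sector $\mu\le|\arg z|\le\pi$, the (honestly flagged but not executed) Rouch\'e localization, and the simplicity argument via $E_{\alpha,2}'$ --- goes beyond the paper's own sketch, which simply cites Djrbashian for these points, and is consistent with the claimed result.
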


\begin{proof}
By taking $N=1$ in the exponential asymptotics \eqref{eqn:asympt}, we get
\begin{equation*}
  E_{\alpha,2}(z) = \frac{1}{\alpha}z^{-\frac{1}{\alpha}}e^{z^\frac{1}{\alpha}}-\frac{1}{\Gamma(2-\alpha)}\frac{1}{z} + O\left(\frac{1}{z^2}\right)
\end{equation*}
for $|z|\rightarrow \infty$. Hence we have
\begin{equation*}
  z^{1-\frac{1}{\alpha}}e^{z^\frac{1}{\alpha}}=\frac{\alpha}{\Gamma(2-\alpha)} + O\left(\frac{1}{z}\right).
\end{equation*}
Next let $\zeta = z^\frac{1}{\alpha}$ and $w=\zeta + (\alpha-1)\ln\zeta$. Then the
equation can be rewritten as
\begin{equation*}
  e^w = \frac{\alpha}{\Gamma(2-\alpha)} + O\left(\frac{1}{w^\alpha}\right).
\end{equation*}
The solutions $w_n$ to the above equation for all sufficiently large $n$ satisfy
\begin{equation*}
   w_n = 2\pi n\, \mathrm{i} + \ln\frac{\alpha}{\Gamma(2-\alpha)} + O\left(\frac{1}{|n|^\alpha}\right),
\end{equation*}
or equivalently
\begin{equation*}
  \zeta_n+(\alpha-1)\ln\zeta_n = 2\pi n\, \mathrm{i} + \ln \frac{\alpha}{\Gamma(2-\alpha)} + O\left(\frac{1}{|n|^\alpha}\right),
\end{equation*}
from which we arrive at the desired assertion
\begin{equation*}
    \zeta_n = 2\pi n\, \mathrm{i} + \ln\frac{\alpha}{\Gamma(2-\alpha)} - (\alpha-1)
          \left(\ln 2\pi |n| + \frac{\pi}{2}\mathrm{sign}(n)\, \mathrm{i}\right) + O\left(\frac{\ln|n|}{|n|}\right).
\end{equation*}
\end{proof}
We can improve the estimate in Theorem \ref{thm:asympt} by including further terms in the
residual tail in the asymptotic \eqref{eqn:asympt}, $N\geq 2$, but with considerable
increases in the complexity of the result.

Clearly, if $\lambda\in\mathbb{C}$ is an eigenvalue of $-D_0^\alpha$, then its conjugate
$\overline{\lambda}$ is also an eigenvalue. Hence we shall restrict our discussions to
$n>0$. As a direct consequence of Theorem \ref{thm:asympt}, there are only finitely many real
eigenvalues for any $\alpha<2$. The existence of real eigenvalues is only guaranteed for
$\alpha$ sufficiently close to $2$ \cite{Popov:2008}. Further, asymptotically, the
eigenvalues $\{\lambda_n\}$ are distributed as
\begin{equation*}
  (-\lambda_n)^\frac{1}{\alpha} \sim 2\pi n\, \mathrm{i} + \ln\frac{\alpha}{\Gamma(2-\alpha)} -
     (\alpha-1) \left(\ln 2\pi n + \frac{\pi}{2}\,\mathrm{i}\right).
\end{equation*}

Hence, we immediately get the following corollary.

\begin{cor}
Asymptotically, the magnitude $|\lambda_n|$ and phase $\arg(\lambda_n)$ of the
$n^\mathrm{th}$ eigenvalue $\lambda_n$ are given by
\begin{equation}\label{eqn:eigasym}
  \begin{aligned}
   |\lambda_n| & \sim \left((2\pi n + (1-\alpha)\tfrac{\pi}{2})^2+ ((1-\alpha)\ln 2\pi n +
       \ln \tfrac{\alpha}{\Gamma(2-\alpha)})^2\right)^\frac{\alpha}{2}\sim (2\pi n)^\alpha,\\
   \arg(\lambda_n)&  \sim \pi- \alpha\mathrm{atan}\frac{2\pi n + (1-\alpha)\tfrac{\pi}{2}}{(\alpha-1)\ln 2\pi n
      - \ln \tfrac{\alpha}{\Gamma(2-\alpha)}}\sim \frac{(2-\alpha)\pi}{2}.
  \end{aligned}
\end{equation}
\end{cor}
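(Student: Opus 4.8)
The plan is to read off the real and imaginary parts of $\zeta_n:=(-\lambda_n)^{1/\alpha}$ from the asymptotic distribution recorded just before the corollary (a direct consequence of Theorem~\ref{thm:asympt}), and then convert from $\zeta_n$ to $\lambda_n=-\zeta_n^{\alpha}$ by elementary polar-coordinate bookkeeping. Writing $\zeta_n=A_n+B_n\mathrm{i}$, that formula gives
\begin{equation*}
  A_n=(1-\alpha)\ln 2\pi n+\ln\tfrac{\alpha}{\Gamma(2-\alpha)}+o(1),\qquad
  B_n=2\pi n+(1-\alpha)\tfrac{\pi}{2}+o(1),
\end{equation*}
the $o(1)$ terms coming from the remainder $d_n=O(\ln|n|/|n|)$.

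I would first dispose of the magnitude, which is routine. Since $|\lambda_n|=|\zeta_n|^{\alpha}=(A_n^2+B_n^2)^{\alpha/2}$, substituting $A_n$ and $B_n$ reproduces the first line of \eqref{eqn:eigasym} directly. The coarser estimate $|\lambda_n|\sim(2\pi n)^{\alpha}$ then follows because $B_n$ grows linearly in $n$ while $A_n$ grows only logarithmically, so that $A_n^2+B_n^2=B_n^2(1+o(1))\sim(2\pi n)^2$.

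For the phase, note that for all large $n$ one has $A_n<0$ (as $\alpha>1$, the term $(1-\alpha)\ln 2\pi n$ dominates) and $B_n>0$, so $\zeta_n$ lies in the second quadrant and
\begin{equation*}
  \arg\zeta_n=\pi-\mathrm{atan}\frac{B_n}{-A_n},\qquad -A_n=(\alpha-1)\ln 2\pi n-\ln\tfrac{\alpha}{\Gamma(2-\alpha)}.
\end{equation*}
Because $-\lambda_n=\zeta_n^{\alpha}$ we have $\arg(-\lambda_n)=\alpha\arg\zeta_n$, and passing from $-\lambda_n$ to $\lambda_n$ shifts the argument by $\pi$; this produces the arctangent structure of the second line of \eqref{eqn:eigasym}, whose argument is exactly $B_n/(-A_n)$. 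Since $B_n/(-A_n)\sim 2\pi n/((\alpha-1)\ln 2\pi n)\to+\infty$, the arctangent tends to $\tfrac{\pi}{2}$, giving the limiting phase $\pi-\alpha\cdot\tfrac{\pi}{2}=\tfrac{(2-\alpha)\pi}{2}$.

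The one delicate point, and the place where I expect the genuine bookkeeping, is the branch accounting in the phase. One must fix a single branch of the $\alpha$-th power throughout and control the $2\pi$ ambiguity introduced both by raising $\zeta_n$ to the power $\alpha$ and by the factor $-1=-\lambda_n/\lambda_n$; these affect only the additive constant in front of the arctangent, not its argument, which is what carries the analytic content. Since the eigenvalues occur in conjugate pairs $\lambda_n,\overline{\lambda_n}$, this ambiguity is precisely the freedom to label which member of the pair is called $\lambda_n$. Fixing that labeling (consistently with the restriction to $n>0$) pins down both the leading constant $\pi$ and the sign of $\tfrac{(2-\alpha)\pi}{2}$ in \eqref{eqn:eigasym}, while the magnitude asymptotics are insensitive to the choice.
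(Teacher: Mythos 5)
Your proposal is correct and follows essentially the same route as the paper, which states the corollary as an immediate consequence of the asymptotic $(-\lambda_n)^{1/\alpha}\sim 2\pi n\,\mathrm{i}+\ln\tfrac{\alpha}{\Gamma(2-\alpha)}-(\alpha-1)(\ln 2\pi n+\tfrac{\pi}{2}\mathrm{i})$ by reading off real and imaginary parts and converting to polar form. Your explicit attention to the branch/labeling ambiguity in the phase (which only flips the sign via conjugation and leaves the asymptotic content unchanged) is a point the paper passes over silently, but it does not constitute a different method.
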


\begin{figure}[h!]
  \centering
  \begin{tabular}{cc}
  \includegraphics[trim = 20mm 2mm 20mm 2mm, clip, width=.5\textwidth]{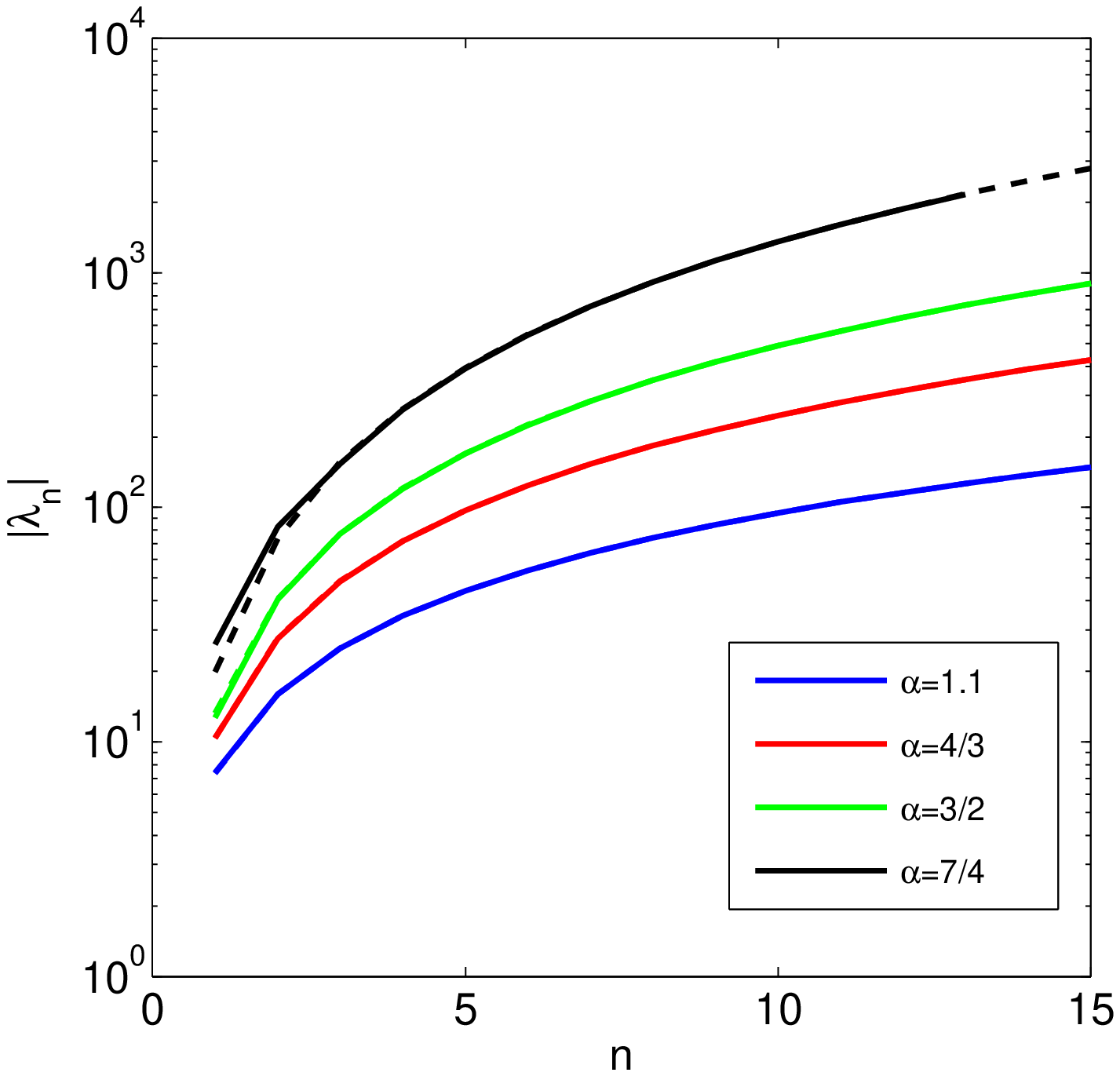} &
  \includegraphics[trim = 20mm 2mm 20mm 2mm, clip, width=.5\textwidth]{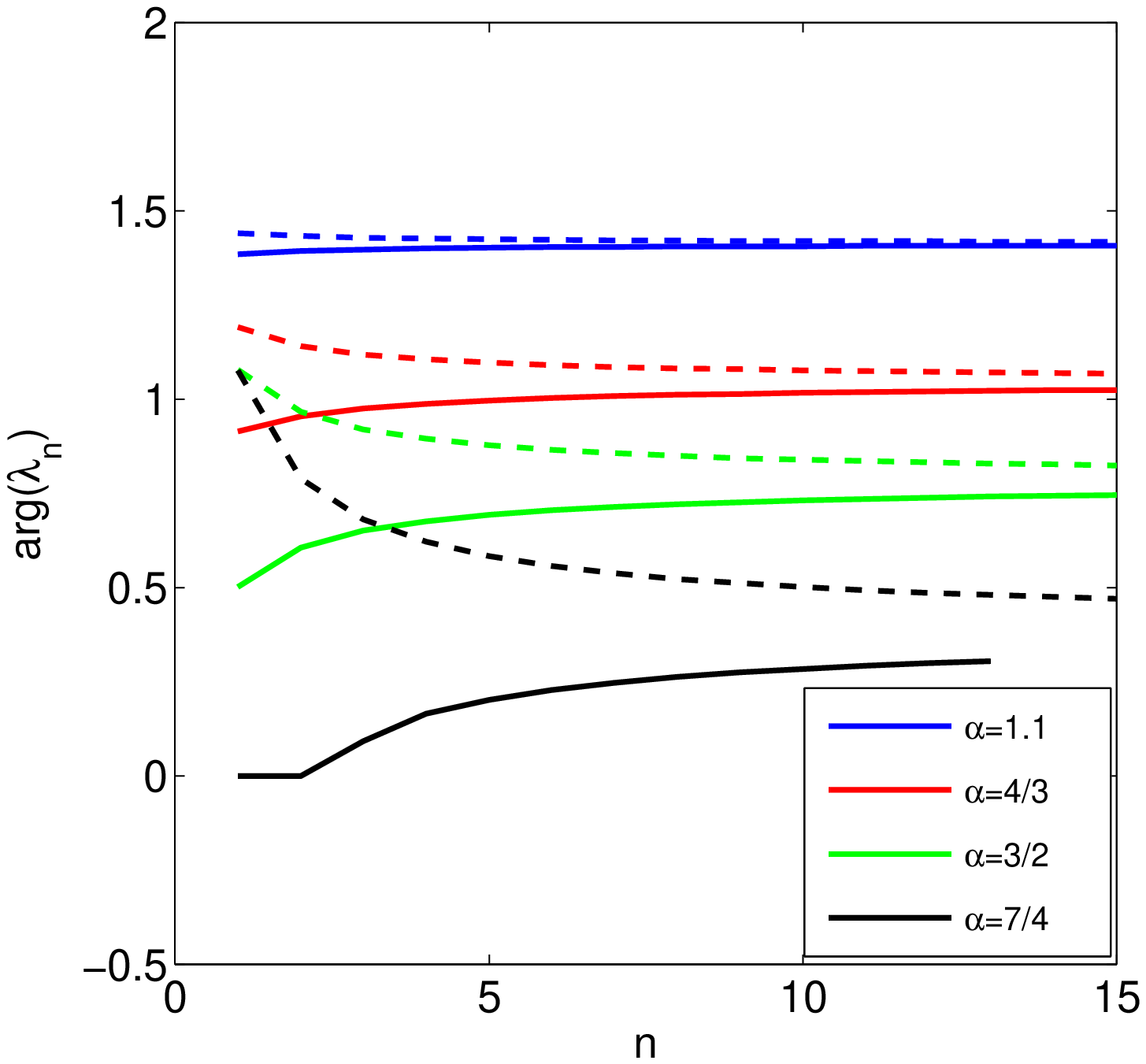}\\
  (a) magnitude $|\lambda_n|$ & (b) phase $\mathrm{arg}(\lambda_n)$
  \end{tabular}
  \parbox{5in}{\caption{Numerical verification of the asymptotic formula \eqref{eqn:eigasym}:
  the solid and dashed lines represent the true and predicted values, respectively, for
  $\alpha=1.1$, $4/3$, $3/2$ and $7/4$.}\label{fig:lam}}
\end{figure}

We show in Figure~\ref{fig:lam} the magnitude and phase of true eigenvalues and their
predictions by the asymptotic formula \eqref{eqn:eigasym}. The true eigenvalues are
calculated by a quasi-Newton method (cf. Appendix \ref{sec:newton}), and the values are
precise in the first six digits. The magnitudes can be accurately predicted by formula
\eqref{eqn:eigasym}, except the first few eigenvalues. Hence, it might allow extracting
the exponent $\alpha$ directly from the sequence of eigenvalues. The approximation of the
phase is not as good, cf. Figure~\ref{fig:lam}(b), and it is accurate only for very large
$n$, especially for $\alpha$ values close to $2$. This is attributed to both crude
approximation in deriving the asymptotic formula \eqref{eqn:eigasym} and the slow
convergence of the function \texttt{atan} to $\frac{\pi}{2}$. This is in stark contrast
with the classical case $\alpha=2$, where the asymptotic formula is very accurate even
for relatively small $n$ \cite{ChadanColtonPaivrintaRundell:1997}. The first ten
eigenvalues are shown in Figure~\ref{fig:lamval} for $\alpha=1.10$ and $1.75$. We observe
that, at least for $\alpha$ not close to unity, the asymptotic formula
\eqref{eqn:eigasym} is a poor approximation for small spectral numbers. So one cannot
make much use of this from finite spectral data consisting of the lowest frequencies and
it is precisely such data that is most easily measured in practice.

\begin{figure}
  \centering
  \begin{tabular}{cc}
  \includegraphics[trim = 20mm 2mm 20mm 2mm, clip, width=.5\textwidth]{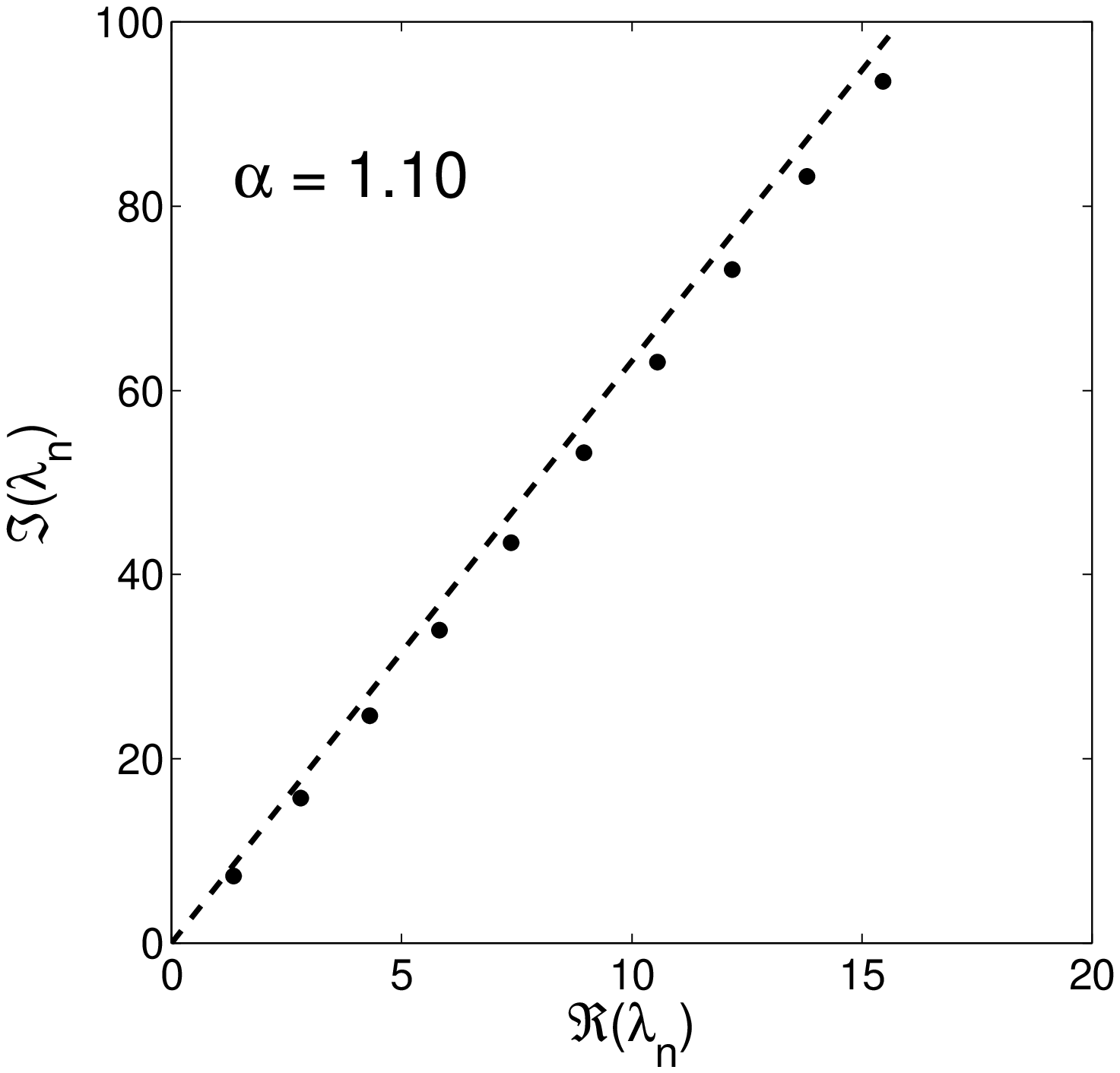} &
  \includegraphics[trim = 20mm 2mm 20mm 2mm, clip, width=.5\textwidth]{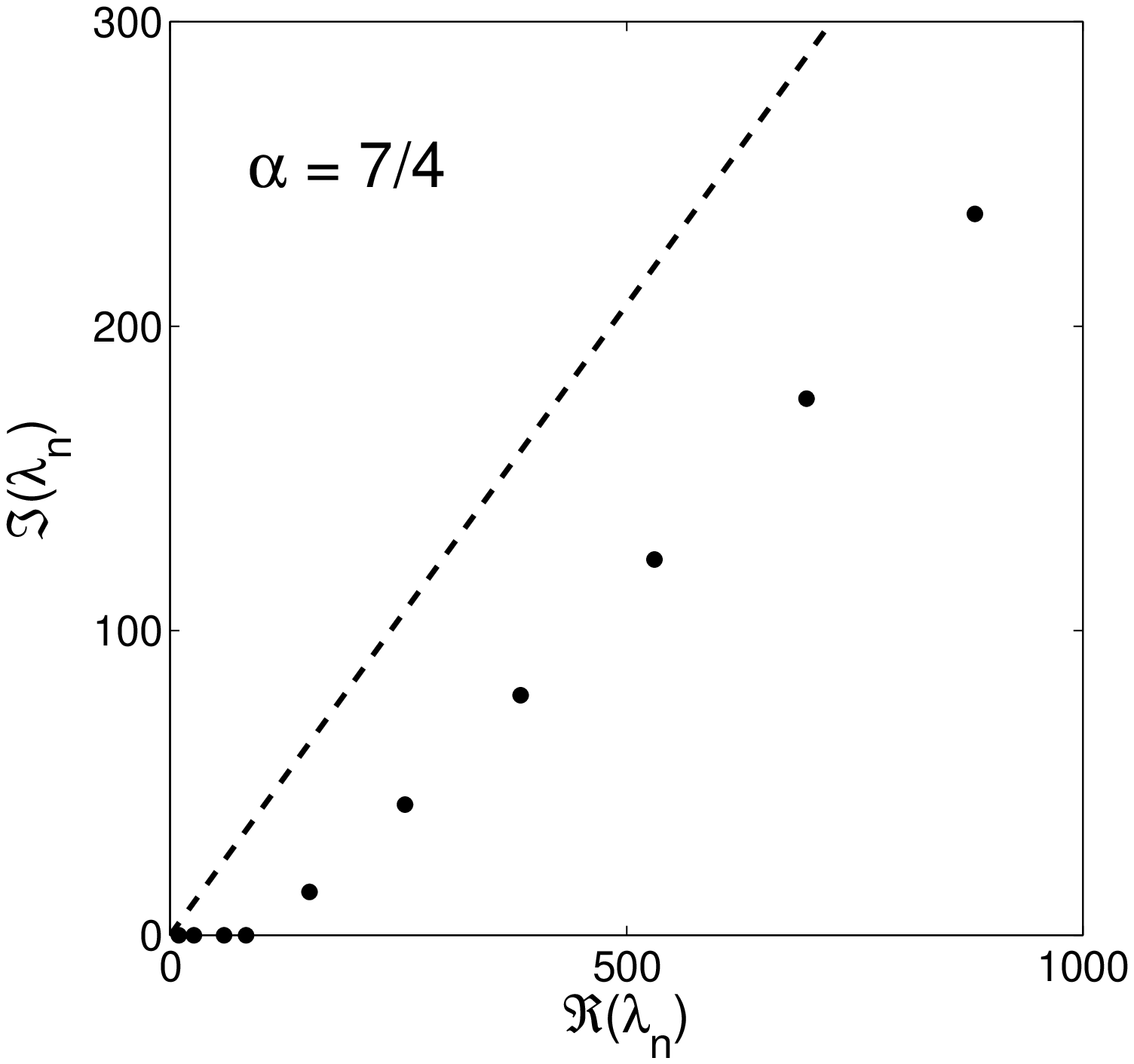}
  %(a) $\alpha=1.10$ & (b) $\alpha=1.75$
  \end{tabular}
  \parbox{5in}{\caption{The first ten eigenvalues for $\alpha=1.10$ (left) and $\alpha=1.75$ (right). Here the dashed
  line represents the asymptotic with angle $(1-\frac{\alpha}{2})\pi$.}\label{fig:lamval}}
\end{figure}

The eigenfunctions $u_n$ for the operator $-D_0^\alpha$ are given by $ u_n =
xE_{\alpha,2}(-\lambda_nx^\alpha)$. These functions are reminiscent of sinusoids (cf.
Figure~\ref{fig:eigfcn}), the eigenfunctions for $\alpha=2$, but significantly attenuated
close to $x=1$. The degree of attenuation strongly depends on fractional order $\alpha$.
It is observed that for a complex eigenvalue, the number of interior zeros of the real
and imaginary parts of the respective eigenfunction $u_n$ always differs by one, and that
(either real or imaginary part) for two consecutive eigenvalues differs by two. However,
the number of zeros for the consecutive real eigenvalues differ only by one (see the last
row of Figure~\ref{fig:eigfcn}) just as for the classical SLP.

\begin{figure}
  \centering
  \begin{tabular}{cc}
   \includegraphics[trim = 20mm 2mm 20mm 2mm, clip, width=.5\textwidth]{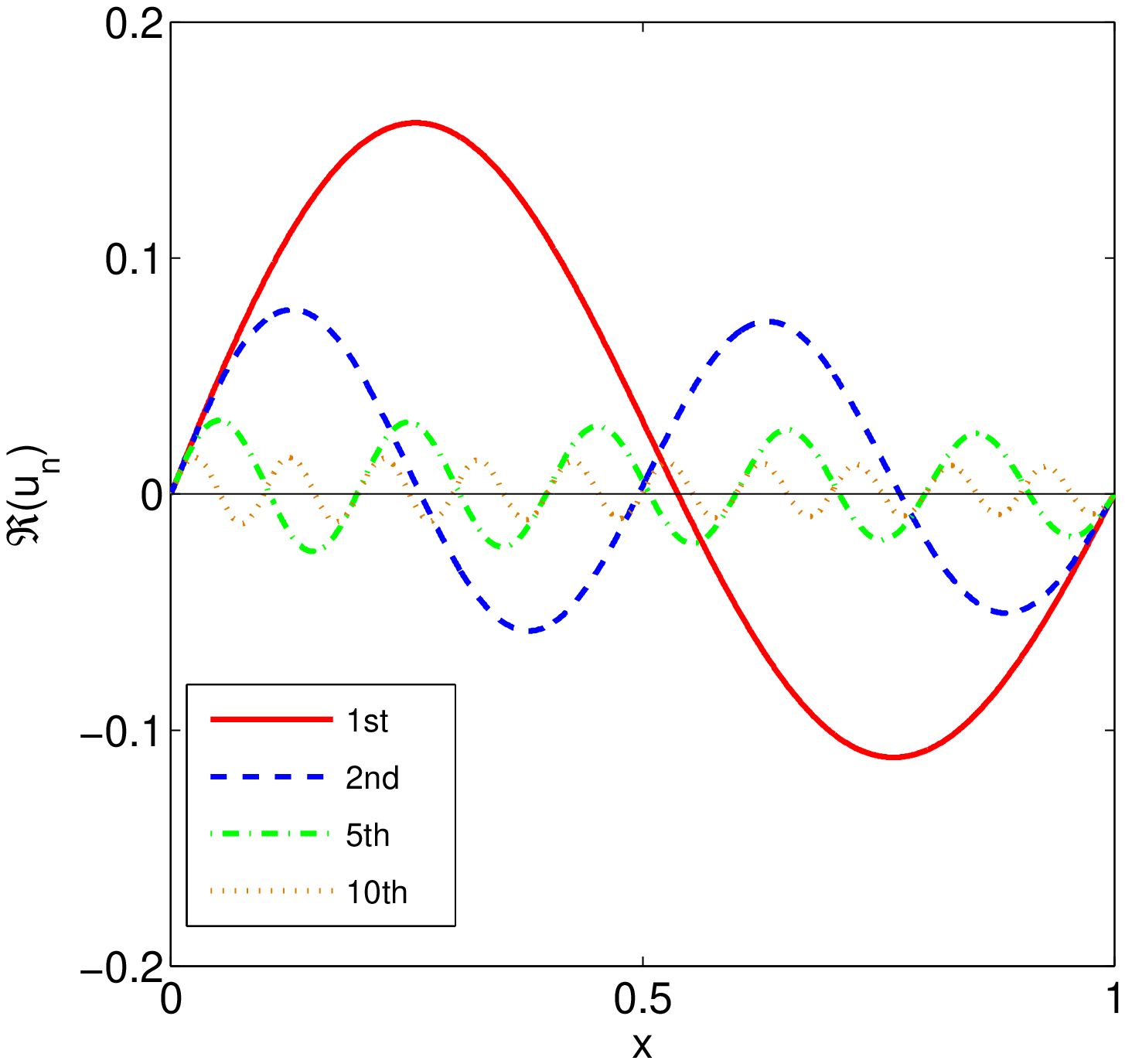} &
   \includegraphics[trim = 20mm 2mm 20mm 2mm, clip, width=.5\textwidth]{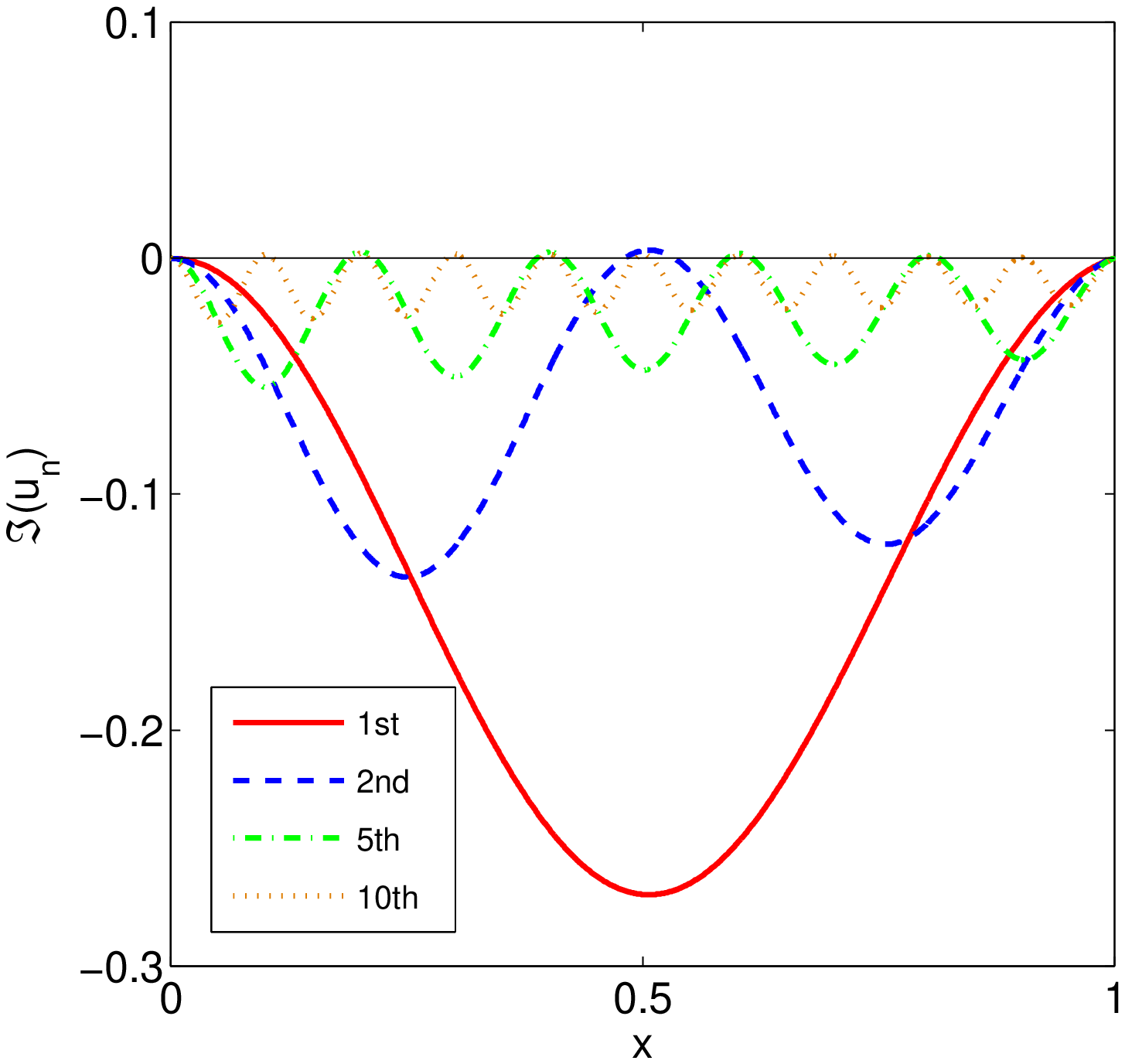}\\
   \includegraphics[trim = 20mm 2mm 20mm 2mm, clip, width=.5\textwidth]{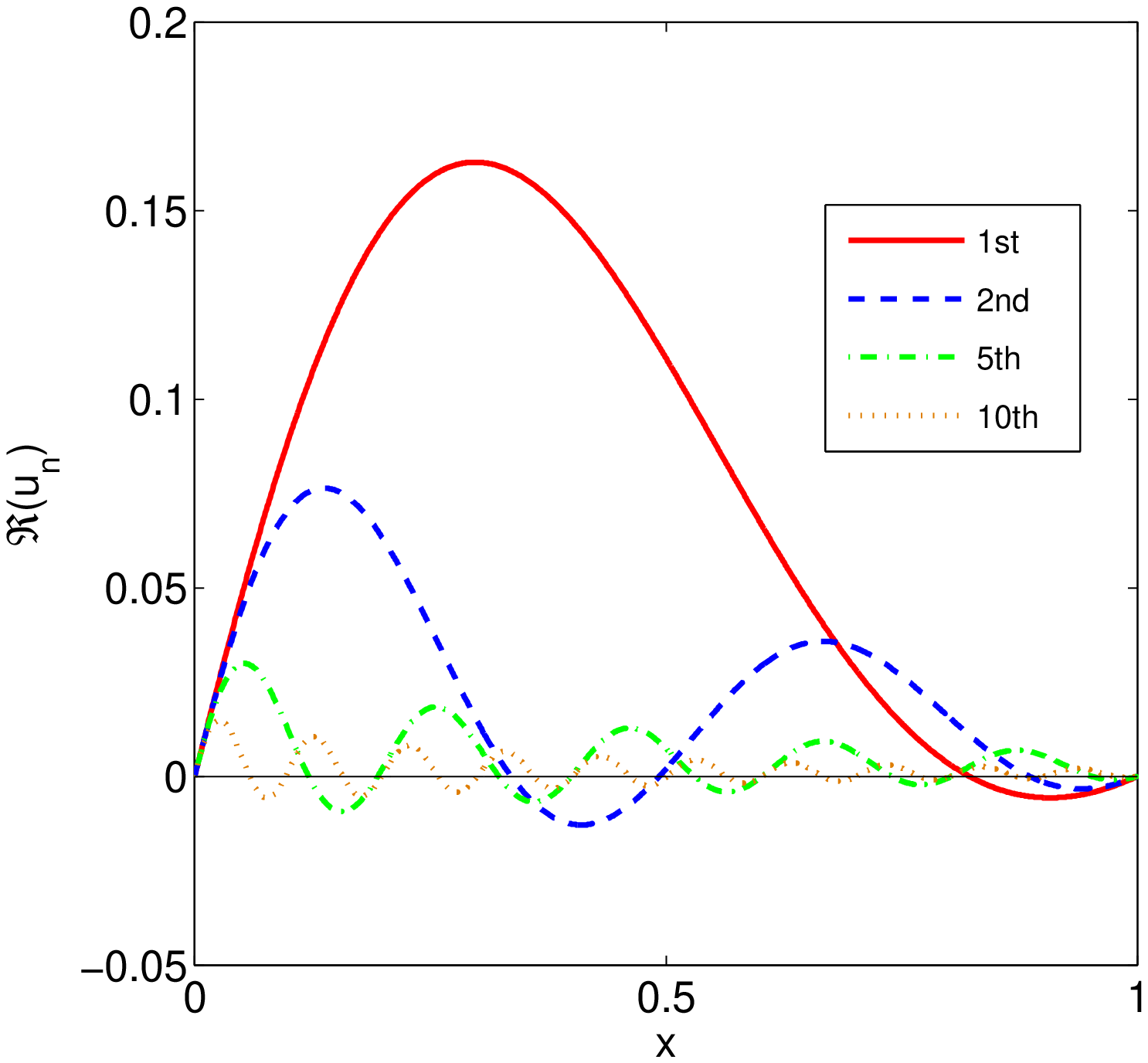} &
   \includegraphics[trim = 20mm 2mm 20mm 2mm, clip, width=.5\textwidth]{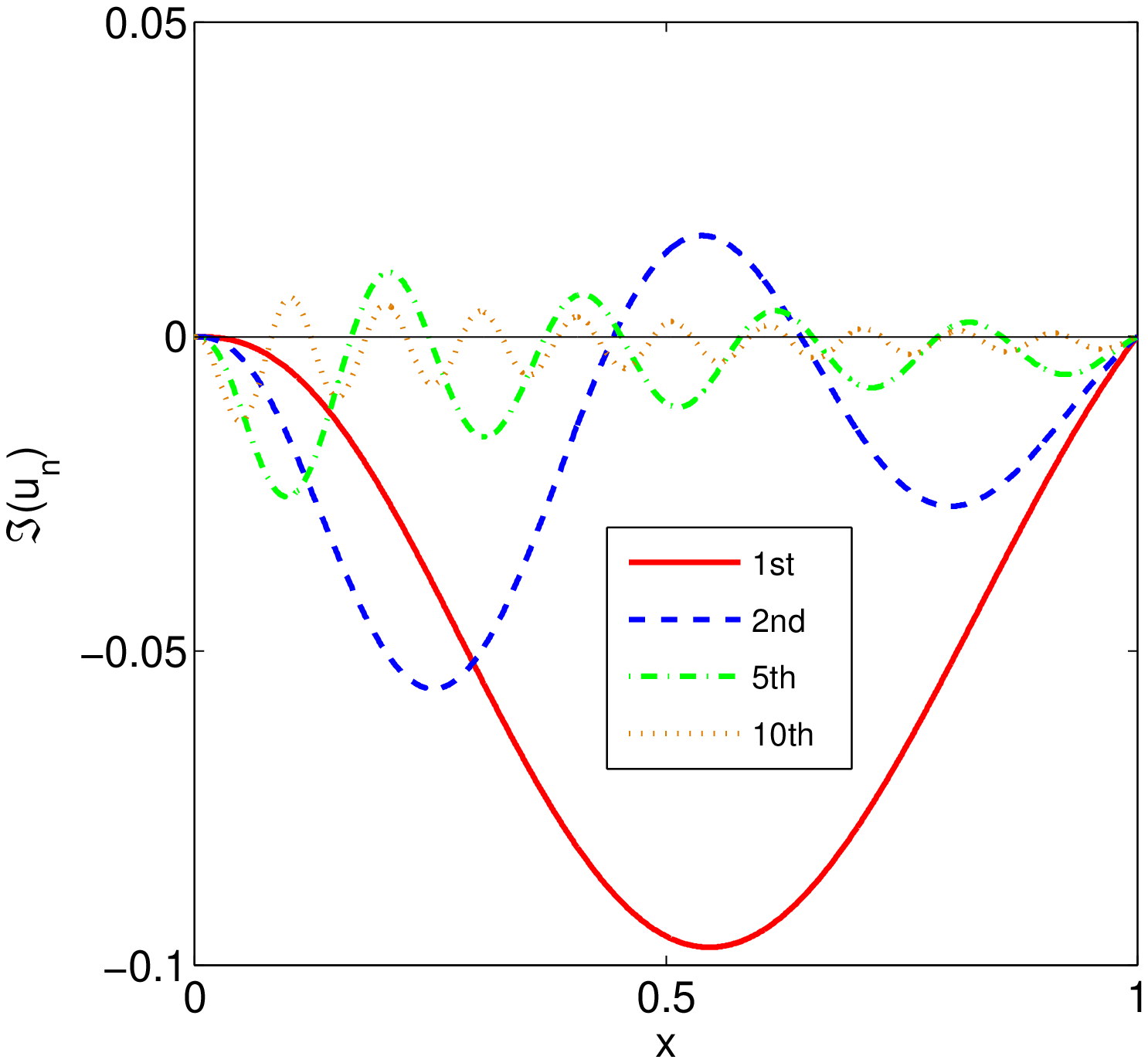}\\
   \includegraphics[trim = 20mm 2mm 20mm 2mm, clip, width=.5\textwidth]{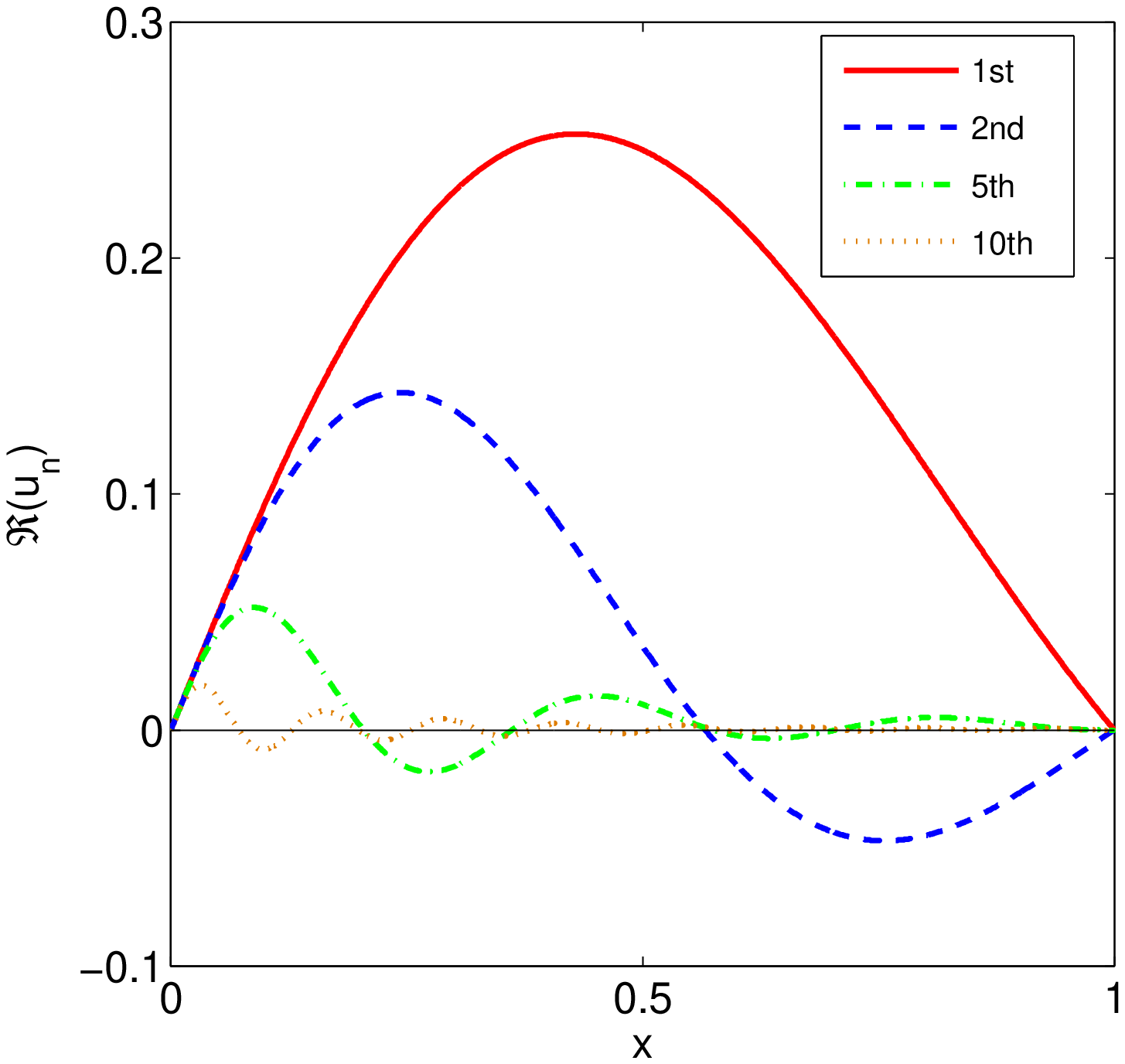} &
   \includegraphics[trim = 20mm 2mm 20mm 2mm, clip, width=.5\textwidth]{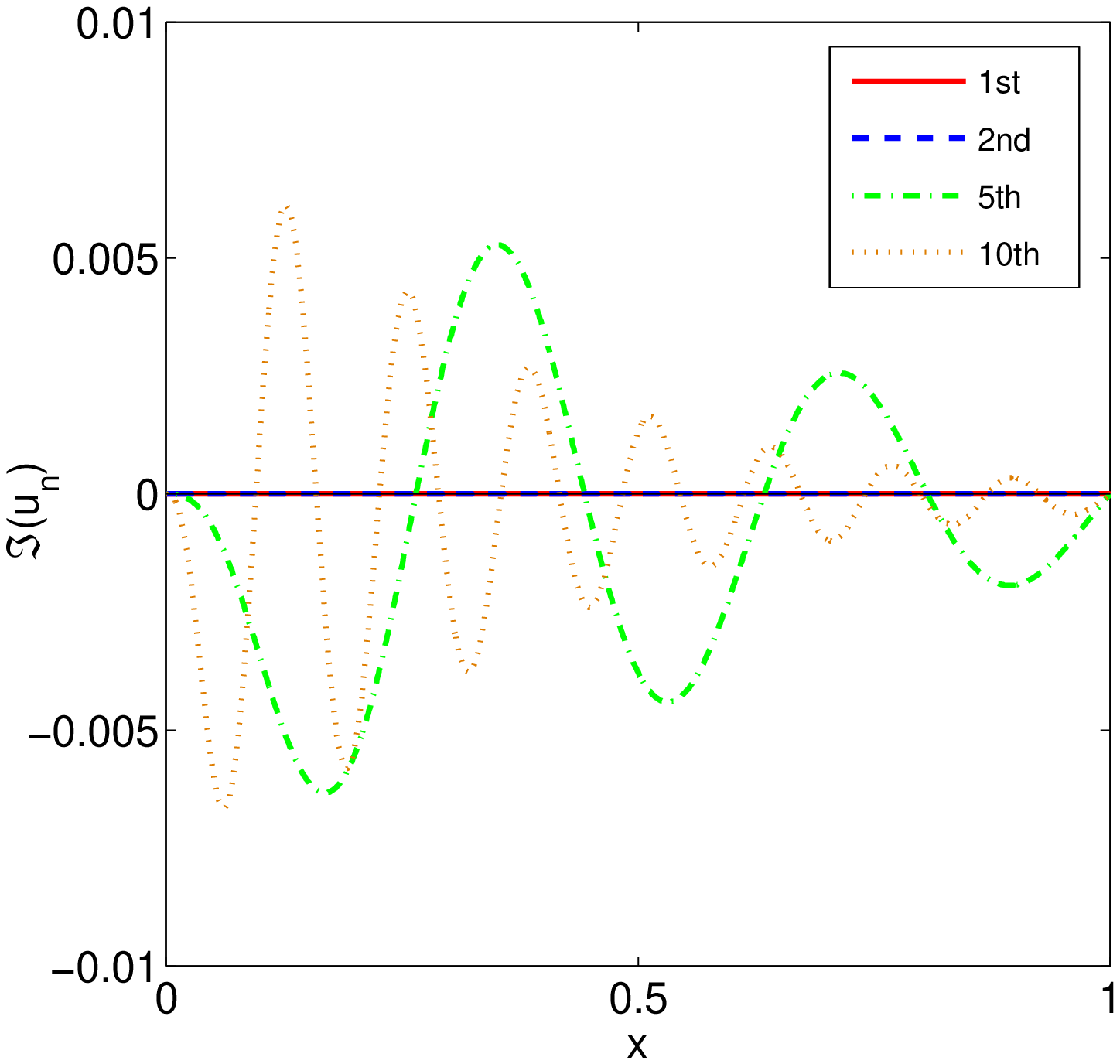}
   % (a) $\Re(u_n)$ & (b) $\Im(u_n)$
  \end{tabular}
  \parbox{5in}{\caption{Real (left) and imaginary (right) parts of eigenfunctions $u_n$ for
  $\alpha=1.10,1.50$ and $1.75$ (from top to bottom). }\label{fig:eigfcn}}
\end{figure}

In the classical inverse SLP almost all reconstruction algorithms make use, directly or
indirectly, of the fact that the eigenvalues are simple and, moreover, for potentials
adjusted to have mean zero, there is an interval condition: there is exactly one
(Dirichlet) eigenvalue in each interval $(n\pi,(n+1)\pi)$ \cite[Chap.
3]{ChadanColtonPaivrintaRundell:1997}. In fact a similar result can be shown for the
function $E_{\alpha,\beta}$ with $\alpha=2$ and $1<\beta<3$ \cite[Thm.
1.4-2]{Djrbashian:1993}. However, if $1<\alpha<2$ then this extremely useful property is
either no longer ensured, or indeed does not hold. The asymptotic formula shows that all
sufficiently large eigenvalues are simple, but it remains an open question if this is
true in general -- even for the case $q=0$!

In the classical case, the eigenfunctions have the asymptotic form $u_n(x) =
\sin(\sqrt{\lambda_n}x)/\sqrt{\lambda_n}$ ($\cos(\sqrt{\lambda_n}x)$ in the case of
non-Dirichlet conditions at $x=0$) \cite{ChadanColtonPaivrintaRundell:1997}, and these
are very good approximations for moderately sized smooth potentials. Thus distinguishing
eigenfunctions as well as eigenvalues is straightforward. Indeed, this clear
correspondence also carries over to the potential recovery: if we are missing information
about the $m^{\rm th}$ eigenvalue, then we are unable to obtain information about the
$m^{\rm th}$ Fourier mode of the potential \cite{Hochstadt:1973} but the other modes are
essentially unaffected.

In the case $\alpha<2$, the picture is quite different.
It is known that for $\alpha>5/3$ there are at least two real zeros (and hence
eigenvalues) to the function $E_{\alpha,2}(z)$ \cite[Thm. 1]{Popov:2008}. Careful
numerical experiments indicate that the first real zeros appear around $\alpha=1.6$ (more
precisely within the interval $(1.5991152,1.5991153)$) and they occur in pairs (there are
4 real eigenvalues for $\alpha=1.75$ as Figure~\ref{fig:lamval} shows) so there appears
to be always an even number of these. However these pairs can be quite close to each
other and so there is no possible interval condition. For example, with $\alpha =
1.5991153$, the two smallest eigenvalues in magnitude are real with approximate values
$14.0024$ and $14.0150$. Further refinement indicates these two eigenvalues are genuinely
simple. The eigenfunction of the lower eigenvalue has no zeros in $(0,1)$, while the
larger has one single zero and so they are linearly independent. However, the zero occurs
at $x\approx 0.9994$ and the supremum norm difference of the two is less than
$1.834\times10^{-4}$. Thus for all practical computational purposes, neglecting one of
these pairs will have no effect. With $\alpha=1.599025$, there are no real roots and the
two smallest eigenvalues in magnitude are the complex conjugate pairs $14.0062\pm
0.1955\,\mathrm{i}$. As we noted above, the imaginary part of the corresponding
eigenfunction has no zeros in $(0,1)$, and the real part does indeed have a zero in this
interval. However, this is at a point $x_0\in (0.9997,0.9998)$ and the maximum value of
the real part on $(x_0,1)$ is less that $5.2501\times10^{-9}$. Thus practically speaking,
this zero plays no computational role. These examples also highlight the difficulty in
obtaining analytic results or estimates on the properties of the eigenvalues or
eigenfunctions.
\subsection{Differential operator $-D_0^\alpha+q(x)$}

In the classical SLP, the presence of the potential $q(x)$ influences the eigenvalues by
\begin{equation}\label{eqn:decay}
  \lambda_n (q) = \lambda_n(0) + \int_0^1q(t)dt + c_n,
\end{equation}
where the remainders $c_n$ decay to zero as $n\rightarrow+\infty$. The sequence $\{c_n\}$
effectively encodes all the information about the potential $q(x)$. This formula
represents only a qualitative behavior. In practice, the decay rate of $c_n$ can differ
markedly, dependent on the smoothness of the potential: the smoother is the potential,
the faster is the decay. Numerically, we can observe similar behavior for fractional
SLPs.
\begin{figure}[h!]
  \centering
  \begin{tabular}{cc}
     \includegraphics[trim = 20mm 2mm 20mm 2mm, clip, width=.5\textwidth]{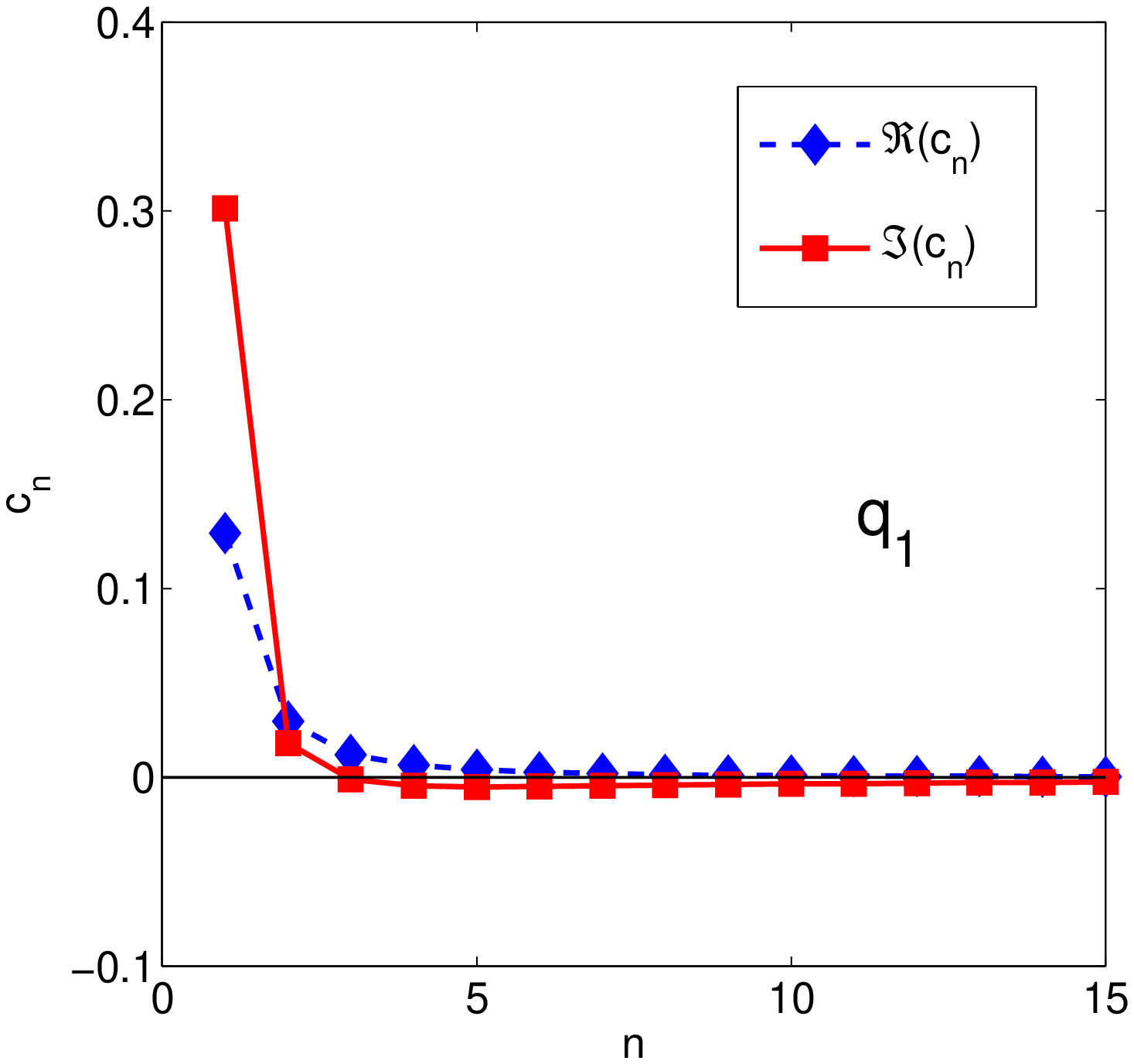} &
     \includegraphics[trim = 20mm 2mm 20mm 2mm, clip, width=.5\textwidth]{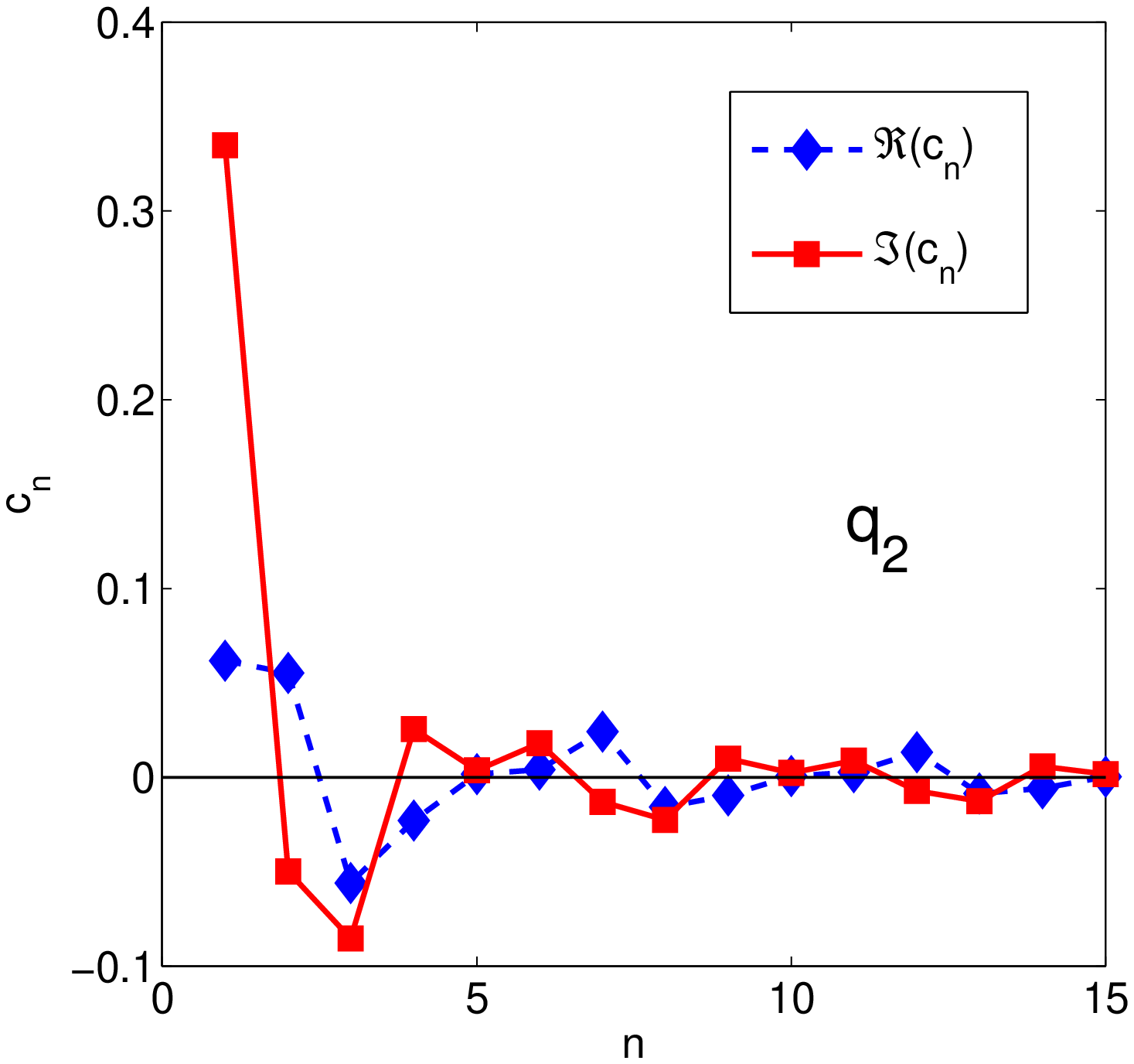}
  \end{tabular}
  \parbox{5in}{\caption{The decay of the remainder $c_n$ for $\alpha=3/2$.}\label{fig:eigdc}}
\end{figure}

\noindent To illustrate the point, we compute the eigenvalues for the following two
potentials
\begin{equation*}
    q_1(x) = 20x^3(e^{-(x-\frac{1}{2})^2}-e^{-\frac{1}{4}})\quad \mbox{and}\quad
     q_2(x) =\left\{\begin{array}{ll}
            -2x, &  x\in [0,\tfrac{1}{5}],\\
            -\tfrac{4}{5}+2x, & x\in[\tfrac{1}{5},\tfrac{2}{5}],\\
            1, &x\in[\tfrac{3}{5}, \tfrac{4}{5}],\\
            0, & \mbox{elsewhere}.
        \end{array}\right.
\end{equation*}
The profiles of the potentials can be found in Figure~\ref{fig:recon} shown later in \S
\ref{ssec:recon}. The differences between the eigenvalues for the differential operator
$-D_0^\alpha+q(x)$ and that of $-D_0^\alpha +\int_0^1q(t)dt$ are shown in
Figure~\ref{fig:eigdc} for $\alpha=3/2$. The decay of the remainders $c_n$ is much faster
for the smooth potential $q_1$, which holds for both real and imaginary parts of
eigenvalues. Hence, the first few eigenvalues might allow a reasonable reconstruction of
the potential in the inversion procedures. In contrast, in case of the discontinuous
potential $q_2$, the remainder $c_n$ still has noticeably large oscillations for $n$ up
to $15$. These observations are also valid for other $\alpha$ values, and concur with
that for the classical SLP \cite[Figure~1, pp.~82]{ChadanColtonPaivrintaRundell:1997}. We
note that the presence of a nonzero potential may render two neighboring real eigenvalues
into a pair of complex eigenvalues. For example, in case of $\alpha=1.6$, there are two
real eigenvalues for $q=0$, i.e., $13.4205$ and $14.6454$, whereas that for the
potentials $q_1$ and $q_2$ are given by the complex conjugate pair
$14.6843\pm1.7197\mathrm{i}$ and $ 14.2242\pm1.7910\mathrm{i}$, respectively.

\section{An inverse Sturm-Liouville problem}\label{sec:islp}
Now we turn to the inverse problem of recovering the potential $q(x)$ in the fractional
SLP \eqref{eqn:slp} from (finite) spectral data $\{\lambda_n\}_{n=1}^N$, with the help of a
simplified Newton method. Numerically we observe that one single spectrum can uniquely
determine a general potential.

\subsection{A Newton method}
We shall numerically solve the inverse SLP by a Newton type method based on that
of \cite{LowePilantRundell:1992} for the classical case.
First, we denote by $u(q,\lambda)$ the solution to the
initial value problem
\begin{equation}\label{eqn:ivp}
  \left\{\begin{aligned}
   &-D_0^\alpha u + qu = \lambda u \quad 0<x<1,\\
   & u (0 )= 0,\quad u'(0) = 1.
  \end{aligned}\right.
\end{equation}
Obviously, the number $\lambda\in\mathbb{C}$ will be an eigenvalue if the solution
$u(q,\lambda)$ satisfies $u(q,\lambda)(1)=0$.

The knowledge of only finite spectral data $\{\lambda_n\}_{n=1}^N$ or equivalently the vector
$\Lambda_N=(\lambda_1,\ldots,\lambda_N)^\mathrm{t}\in\mathbb{C}^N$, as is often the case
in practice, precludes the determination of an arbitrary potential $q$.
Instead, we seek a potential in a finite-dimensional space spanned
by the basis $\{w_k\}$, and we take its dimension to be $M\leq N$,
allowing us the possibility to reduce this value for purposes of regularization, i.e.,
\begin{equation*}
  q^M(x) = \sum_{k=1}^M q_kw_k(x).
\end{equation*}

Now the problem is to find $q^M\in\mathrm{span}(\{w_k\})$ such that $u(q^M,\lambda)(1)$
vanishes, i.e., $u$ satisfies also the desired right-side boundary condition. We define a
nonlinear mapping $F:\mathbb{R}^M\mapsto\mathbb{C}^N$ by
\begin{equation*}
  F_n(\Lambda_N,q^M) = u(q^M,\lambda_n)(1)\quad n = 1,\ldots,N.
\end{equation*}
For the given set $\Lambda_N$ of eigenvalues, we attempt to solve the system of nonlinear
equations
\begin{equation*}
  F(\Lambda_N,q^M) = 0.
\end{equation*}

Thus Newton's method emerges as a natural candidate (see \cite{LowePilantRundell:1992}
for the case $\alpha=2$) and we can proceed as follows: given an initial guess
$q^0\in\mathrm{span}(\{w_k\}_{k=1}^M)$, we repeat the iteration
\begin{equation}\label{eqn:newton}
  \mathbf{q}^{n+1} = \mathbf{q}^n -(F'(\Lambda_N,q^n))^{-1}(F(\Lambda_N,q^n))\quad n=0,1,\ldots.
\end{equation}
where the vector $\mathbf{q}^n=(q^n_1,\ldots,q^n_M)\in\mathbb{R}^M$ refers to the
expansion coefficients of the approximate potential $q^n$ in the basis $\{w_k\}$, until a certain
stopping criterion is reached. The entries of the Jacobian $F'(\Lambda_N,q)$ can be found
by solving a set of fractional differential equations as we show in the next lemma.
\begin{lem}
The $nk^{th}$ entry of the Jacobian $F'(\Lambda_N,q)$, i.e., $\frac{\partial
F_n(\Lambda_N,q)}{\partial q_k}$, is given by $v(\lambda_n,q,w_k)(1)$, where the function
$v(\lambda_n,q,w_k)$ satisfies
\begin{equation}\label{eqn:jac}
  \left\{  \begin{aligned}
     &-D_0^\alpha v + q v = \lambda_n v - w_k u (\lambda_n,q)\quad 0<x<1,\\
     & v(0) = 0, \ \ v'(0)= 0.
  \end{aligned}\right.
\end{equation}
\end{lem}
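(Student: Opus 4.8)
The plan is to compute the partial derivative $\partial F_n/\partial q_k$ directly by differentiating the initial value problem \eqref{eqn:ivp} that defines $u(q,\lambda_n)$ with respect to the expansion coefficient $q_k$, and then to identify the resulting sensitivity $v := \partial u/\partial q_k$ as the solution of \eqref{eqn:jac}. Since $F_n(\Lambda_N,q)=u(q,\lambda_n)(1)$ and evaluation at the endpoint $x=1$ commutes with the (parameter) differentiation $\partial/\partial q_k$, it suffices to show that $v$ solves \eqref{eqn:jac}; the claimed formula $\partial F_n/\partial q_k = v(\lambda_n,q,w_k)(1)$ then follows at once.

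First I would establish that the solution map $q\mapsto u(q,\lambda_n)$ is differentiable in each coefficient $q_k$. The natural route is to recast \eqref{eqn:ivp} as an equivalent Volterra integral equation using the one-sided Green's function whose kernel is built from $x^{\alpha-1}E_{\alpha,\alpha}(-\lambda x^\alpha)$, as noted in \S\ref{sec:slp}, so that $u$ is realized as the fixed point of a contraction depending smoothly on the parameters $q_k$; differentiability of $u$ in $q_k$ then follows by an implicit-function-type argument applied to this integral formulation. This is the step I expect to be the main obstacle, since it requires care in justifying the interchange of the Caputo operator $D_0^\alpha$ — which is itself an integral operator acting on $u''$ — with the parameter derivative $\partial/\partial q_k$, and in controlling the difference quotients uniformly on $[0,1]$ by a dominated-convergence argument.

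Granting the differentiability, the remaining steps are routine. Writing $q=\sum_{j=1}^M q_j w_j$ so that $\partial q/\partial q_k = w_k$, I would differentiate the governing equation $-D_0^\alpha u + qu = \lambda_n u$ with respect to $q_k$; using linearity of $D_0^\alpha$ and the product rule on the term $qu$ yields
\[
  -D_0^\alpha v + w_k u + q v = \lambda_n v,
\]
which rearranges to the inhomogeneous equation in \eqref{eqn:jac} with forcing $-\,w_k\,u(\lambda_n,q)$. Finally, differentiating the initial data $u(0)=0$ and $u'(0)=1$ — both independent of $q_k$ — gives the homogeneous conditions $v(0)=0$ and $v'(0)=0$. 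This determines $v=v(\lambda_n,q,w_k)$ uniquely as the solution of \eqref{eqn:jac}, completing the identification of the Jacobian entry.
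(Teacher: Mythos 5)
Your derivation is correct and is exactly the standard sensitivity (linearization) argument that the lemma rests on: the paper in fact states this lemma without proof, and the formal differentiation of \eqref{eqn:ivp} with respect to $q_k$ — giving $-D_0^\alpha v+qv=\lambda_n v-w_ku$ with homogeneous initial data, hence $\partial F_n/\partial q_k=v(1)$ — is the intended justification. You also honestly flag the one genuinely nontrivial point (differentiability of the solution map $q\mapsto u(q,\lambda_n)$, via the Volterra/Mittag-Leffler integral formulation), which the paper likewise leaves implicit, so nothing essential is missing relative to the source.
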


Therefore, evaluating either the map $F(\Lambda_N,q)$ or the Jacobian $F'(\Lambda_N,q)$
incurs solving initial value problems of fractional order (cf. \eqref{eqn:ivp} and
\eqref{eqn:jac}), which can be numerically accomplished by the predictor-corrector method
in Appendix \ref{sec:app}. The main computational effort in applying the Newton method
\eqref{eqn:newton} lies in computing the Jacobian $F'(\Lambda_N,q)$. We note that for
large $\lambda$ values, the initial value problem is very stiff, and a very refined mesh
may be needed to resolve the problem to a prescribed accuracy.

Of particular interest is the special case $q=0$ and using the respective eigenvalues
$\Lambda_{N,0}$. Then the solution $v(\lambda_{n,0},0,w_k)$ to problem \eqref{eqn:jac}
can be expressed in closed form using the Green's function of the operator
$D_0^\alpha-\lambda$ \cite[eq. (4.1.74), p.~232]{KilbasSrivastavaTrujillo:2006}
\begin{equation*}
  v(\lambda_{n,0},0,w_k)(x) = \int_0^x (x-t)^{\alpha-1}E_{\alpha,\alpha}(-\lambda_{n,0}(x-t)^\alpha) tE_{\alpha,2}(-\lambda_{n,0}t^\alpha)w_k(t)dt.
\end{equation*}

With the approximation $F'(\Lambda_{N,0},0)$ in place of the Jacobian
$F'(\Lambda_N,q^n)$, the iteration \eqref{eqn:newton} leads to the canonical frozen
Newton's method, which is known to converge (locally) linearly. This reduces the
computational expense enormously since computing the Jacobian $F'(\Lambda_N,q^n)$ is
online and very expensive, whereas $F'(\Lambda_{N,0},0)$ can be calculated offline. When
solving \eqref{eqn:newton}, we stack the real and imaginary parts of the matrix
$F'(\Lambda_{N,0},0)$ and the vector $F(\Lambda_N,q^n)$ as follows
\begin{equation*}
\mathbf{J} = \left[\begin{array}{c}\Re(F'(\Lambda_{N,0},0))\\
\Im(F'(\Lambda_{N,0},0))
\end{array}\right]\quad \mbox{and}\quad
\mathbf{r}^n = \left[\begin{array}{c}\Re(F(\Lambda_{N},q^n))\\
\Im(F(\Lambda_N,q^n))
\end{array}\right].
\end{equation*}
In this situation the system has real components only which ensures that the Newton update
$\mathbf{J}^{-1}\mathbf{r}^n$ is always real. Experimentally, the frozen Newton's method
converges fairly fast. As is a general rule for applying these type of methods to
nonlinear inverse problem, the invertibility of the Jacobian is often very difficult to
establish theoretically, and this is the case here. The latter obstacle
is closely related to the lack of a uniqueness result for the inverse SLP problem.
Consequently, we are not able to establish the convergence of the quasi-Newton scheme
and, nor can we derive error estimates for the finite-dimensional approximations.
However, it is conjectured that
$F'(\Lambda_{N,0},0)$ for any $\alpha\in(1,2)$ is invertible in view of our experimental
experiences.

\subsection{Numerical results and discussions}\label{ssec:recon}

Now we are ready to present numerical reconstructions of the potentials $q_1$ and $q_2$
from finite spectral data $\Lambda_N$. For the spectral data ($\Lambda_N$) generation and
for the inversion step, we solve the initial value problem \eqref{eqn:ivp} with the
predictor-corrector algorithm (see Appendix \ref{sec:app}) with a mesh size
$h=\mbox{1e-3}$ and $h=\mbox{1.25e-3}$, respectively, in order to reduce the most obvious
form of ``inverse crime'', but not completely due to the use of identical predictor-corrector algorithm
in the forward and inverse steps. The eigenvalues are found by a quasi-Newton method in Appendix
\ref{sec:newton}. The basis set $\{w_k\}$ is fixed at $\{\sin k\pi\}$. All the
computations were performed with \textsc{matlab} R. 7.12.0(2011a) on a dual core desktop
computer.

\begin{table}[h!]
   \centering \caption{Reconstruction error $e$ for the potentials.}{\small
   \tabcolsep=5pt %plus2pt minus1pt
   \begin{tabular}{|c|ccccccccc|}
     \hline
     $\alpha$      &  $1.02$  & $1.1$   & $6/5$   & $4/3$   & $3/2$   & $5/3$  & $7/4$  & $9/5$  & $1.85$\\
     \hline
     $q_1$ ($N\!=\!8$)  & 4.217e-3 &4.156e-3 &4.071e-3 & 4.013e-3& 4.042e-3&4.690e-3&9.169e-3&2.576e-2&1.218e-1\\
     $q_2$ ($N\!=\!10$) & 1.983e-1 &1.983e-1 &1.983e-1 & 1.984e-1& 1.988e-1&2.026e-1&3.331e-1&1.709e0 &3.469e0 \\
     \hline
   \end{tabular}} \label{tab:err}
\end{table}

The numerical results are summarized in Table \ref{tab:err} and Figure~\ref{fig:recon}.
In the table, the error $e$ of an approximation $\tilde{q}$ to the potential $q$ is
defined by $e=\|q-\tilde{q}\|_{L^2(0,1)}$. We have experimented the reconstruction
algorithm with different numbers of basis and eigenvalues. Although the concrete numbers
differ slightly, the observations drawn from these results are the same. Hence, we shall
present only the results with $N=M=8$ and $N=M=10$ for the potentials $q_1$ and $q_2$,
respectively. We remark that the convergence of the frozen Newton method is steady and
fast, effective numerical convergence  being generally achieved within five iterations,
cf. Figure~\ref{fig:conv} for an illustration.

\begin{figure}
  \centering
  \begin{tabular}{cc}
    \includegraphics[trim = 20mm 2mm 20mm 2mm, clip, width=.5\textwidth]{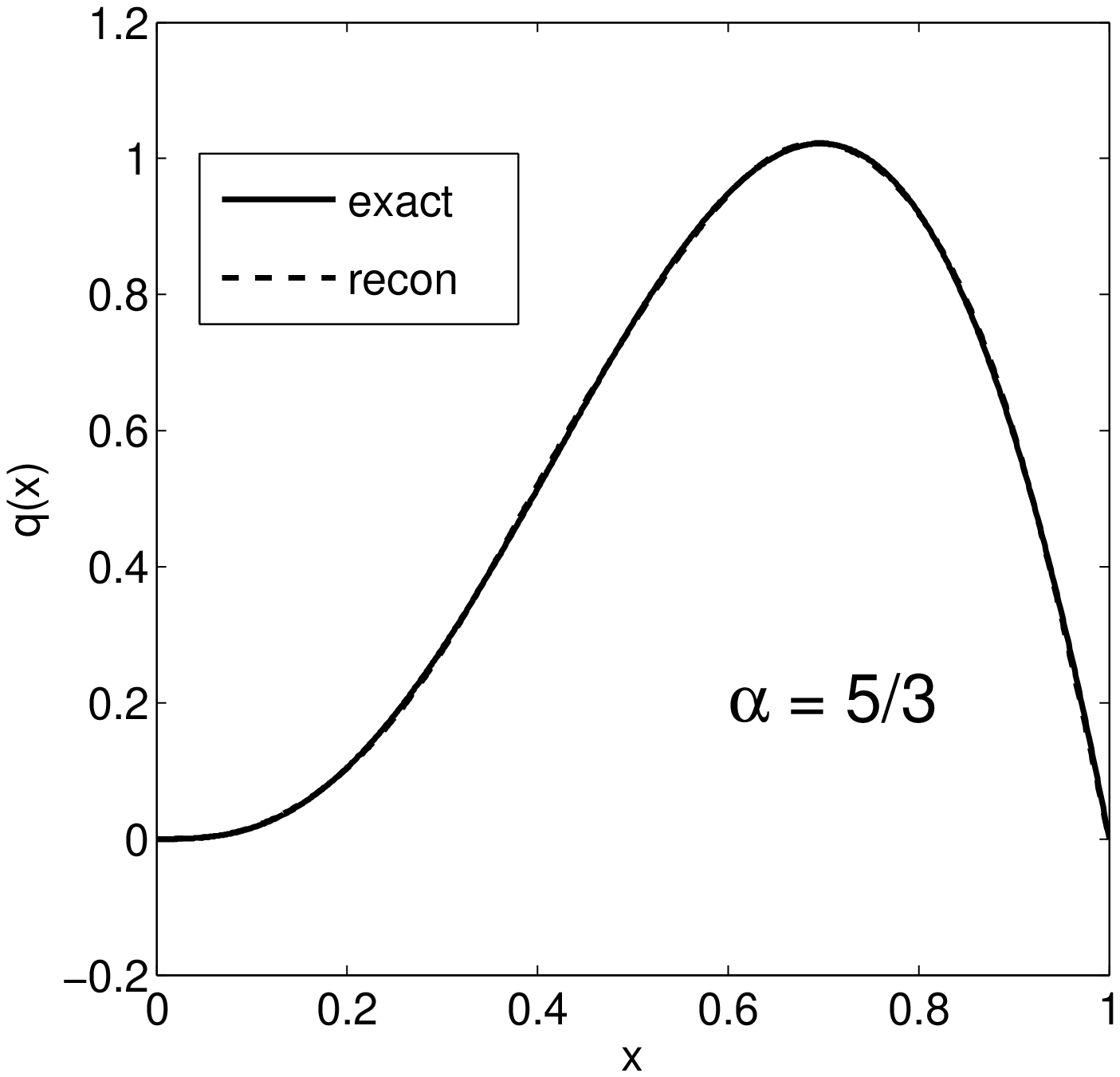} &
    \includegraphics[trim = 20mm 2mm 20mm 2mm, clip, width=.5\textwidth]{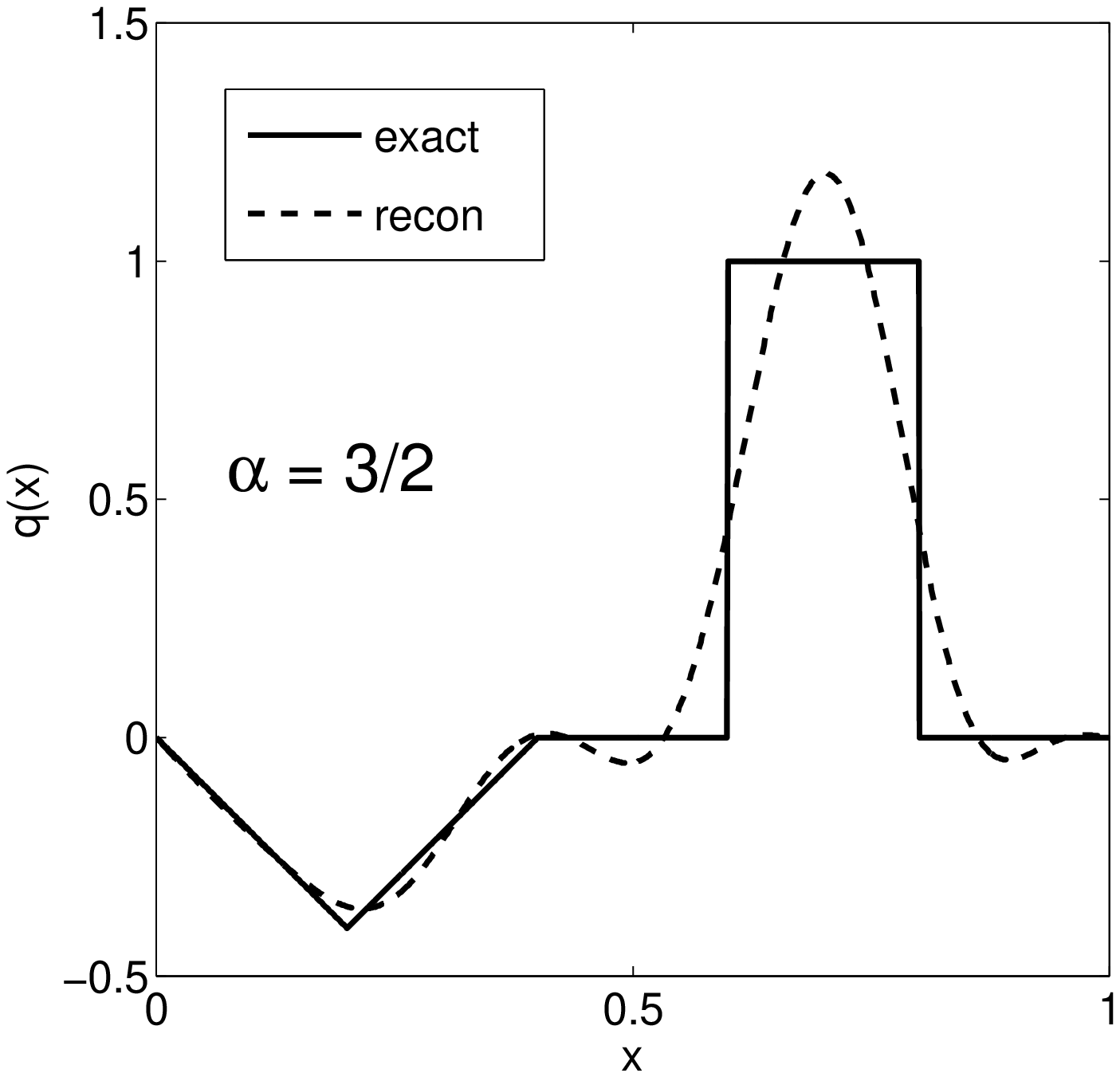}\\
    %$\alpha=5/3$ & $\alpha = 3/2$\\
    \includegraphics[trim = 20mm 2mm 20mm 2mm, clip, width=.5\textwidth]{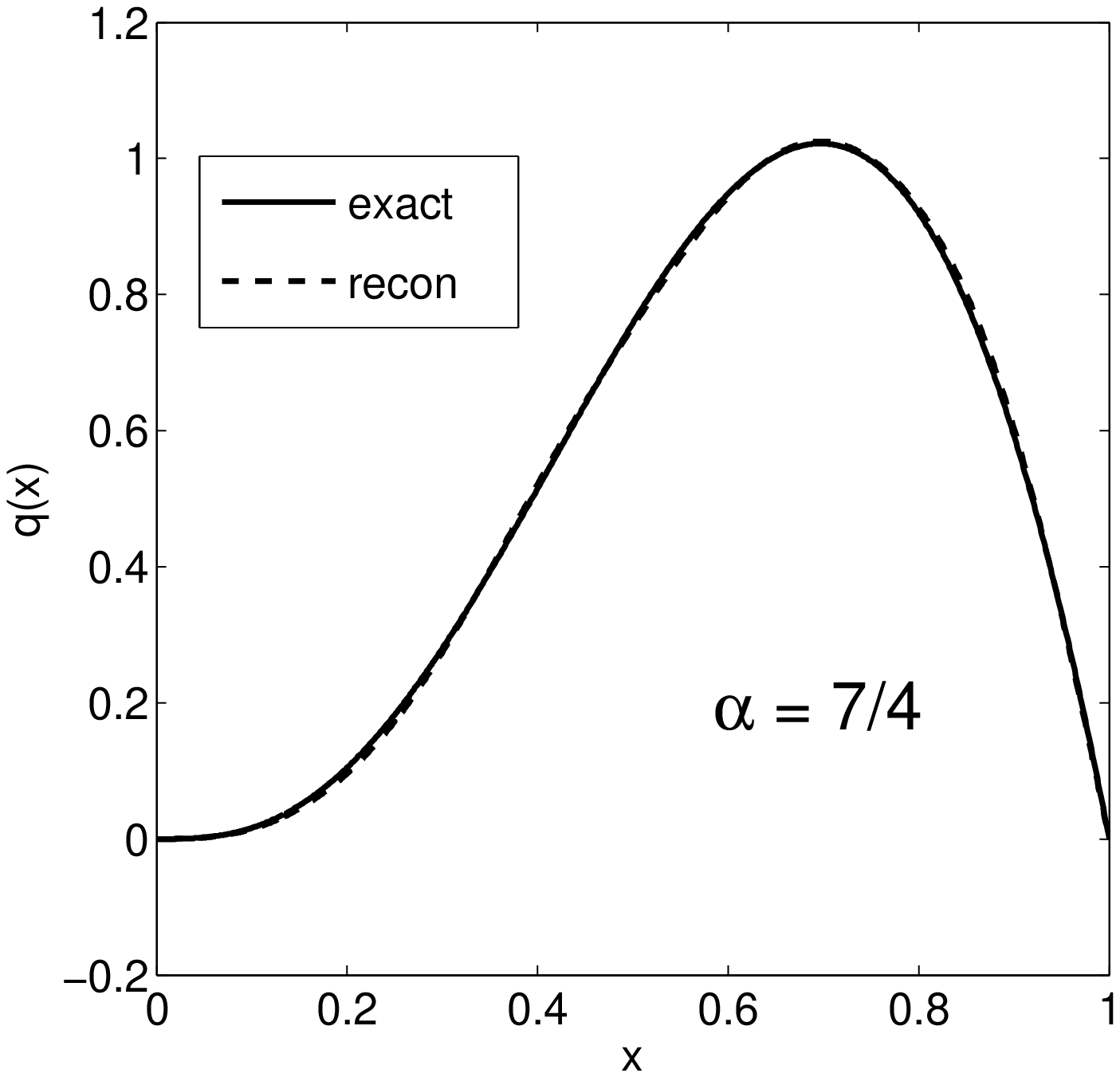} &
    \includegraphics[trim = 20mm 2mm 20mm 2mm, clip, width=.5\textwidth]{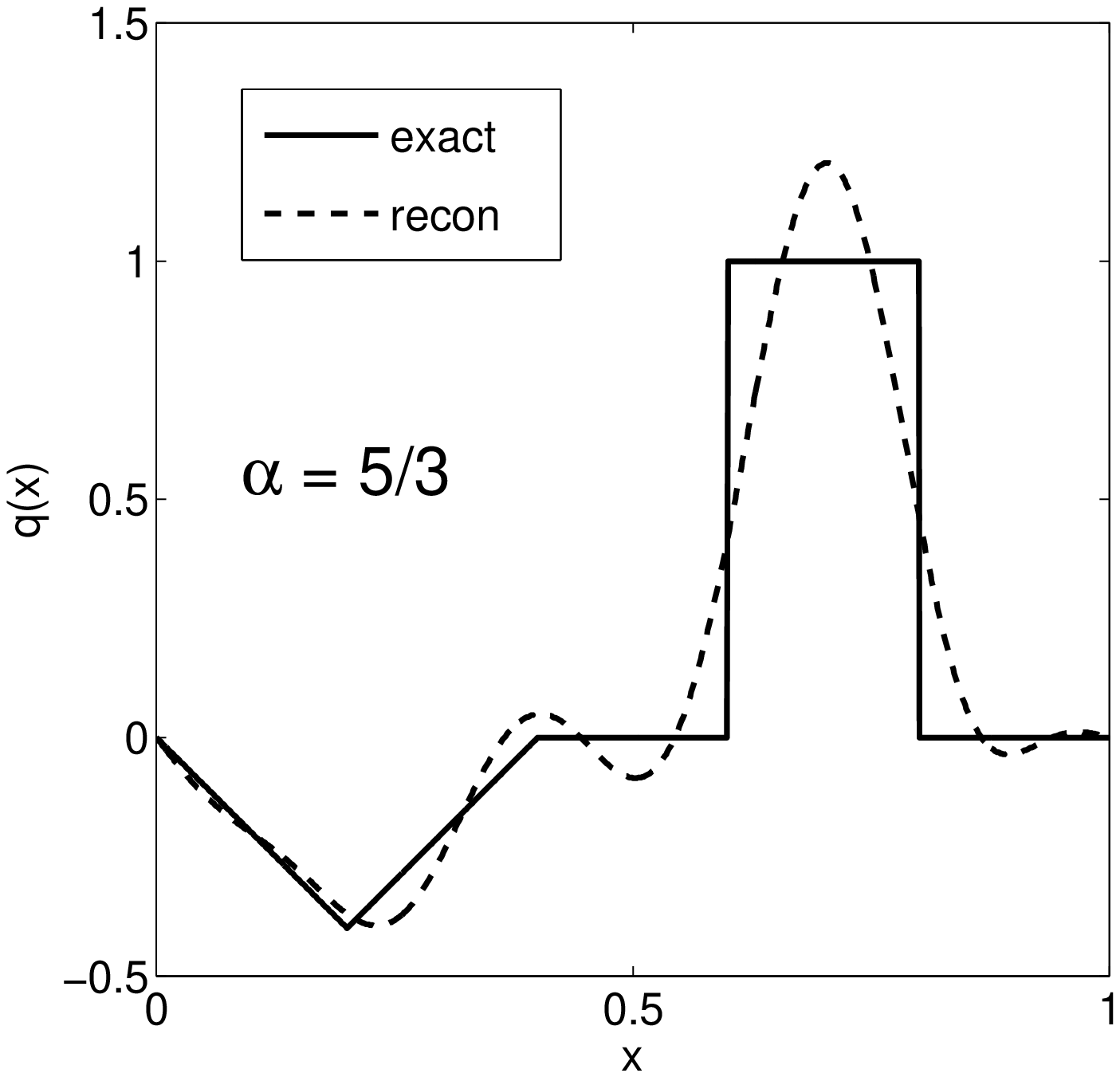}\\
    %$\alpha=7/4$ & $\alpha = 5/3$\\
    \includegraphics[trim = 20mm 2mm 20mm 2mm, clip, width=.5\textwidth]{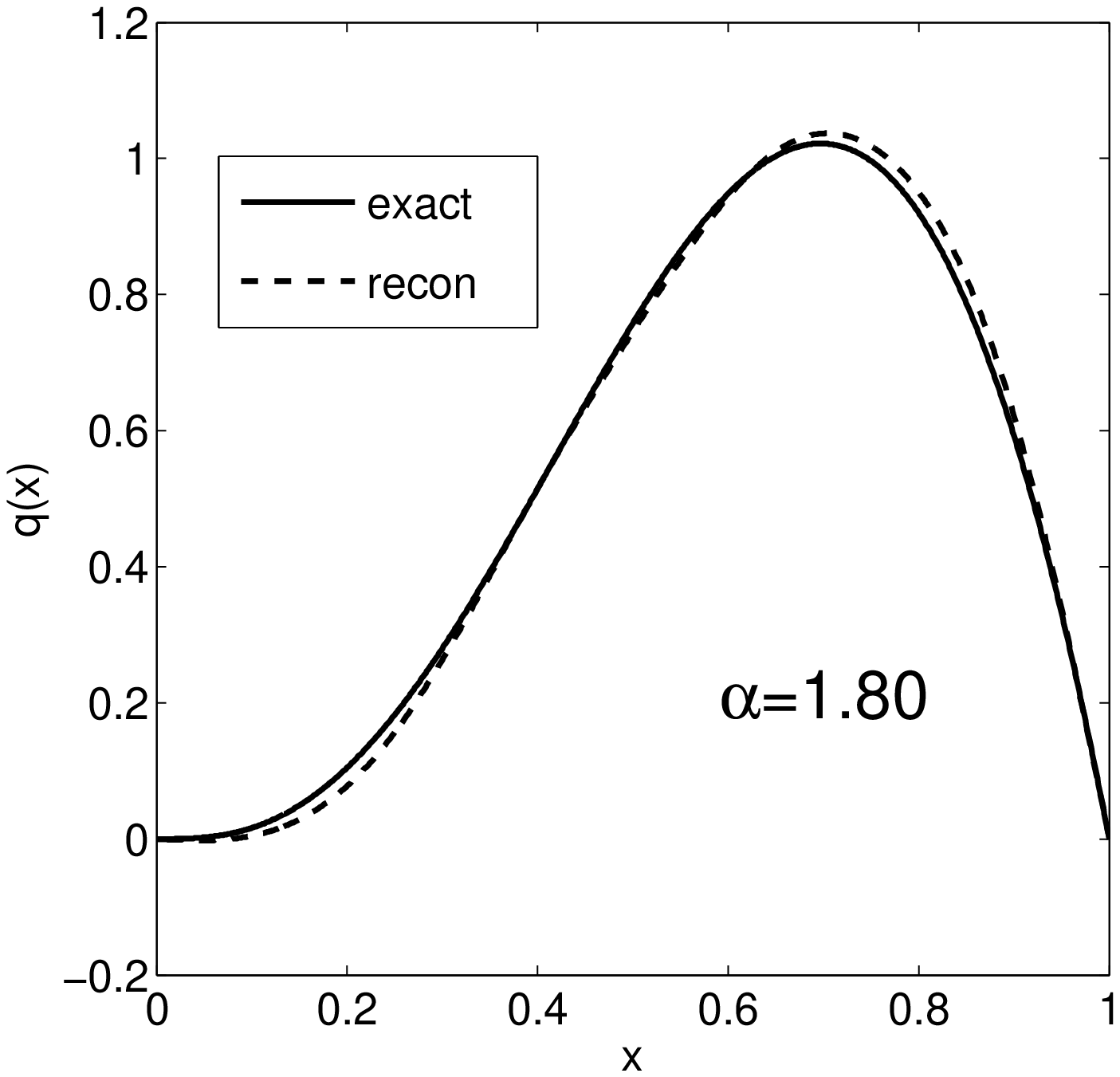} &
    \includegraphics[trim = 20mm 2mm 20mm 2mm, clip, width=.5\textwidth]{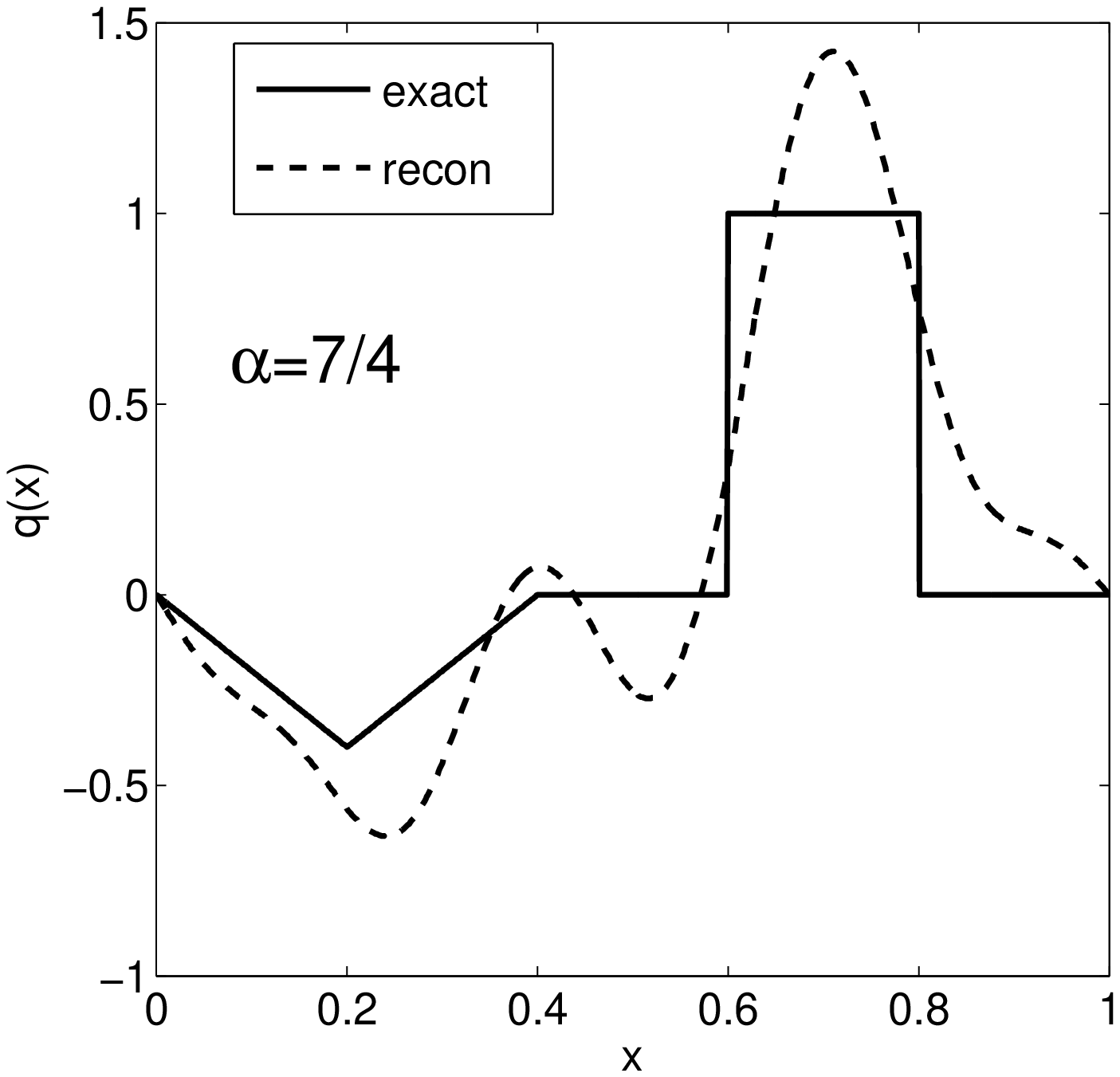}\\
    %$\alpha=1.8$ & $\alpha = 7/4$
  \end{tabular}
  \parbox{5in}{\caption{Numerical reconstructions for the potentials $q_1$ (left) and $q_2$ (right).}\label{fig:recon}}
\end{figure}

For $\alpha$ in the range $(1,\frac{5}{3}]$, the error $e$ of the recovered potential
$q_1$ largely stays constant, and then it deteriorates as the exponent $\alpha$ gets
closer to $2$. Some reconstructed profiles are shown in the left column of
Figure~\ref{fig:recon}. The reconstructions are in excellent agreement with the true
potential for $\alpha\in(1,\frac{5}{3}]$. Hence, one single spectrum allows accurately
reconstructing a general potential. For $\alpha$ value very close to $2$, e.g.,
$\alpha=1.9$, we are unable to obtain a good approximation by even using $15$
eigenvalues. This is expected since in the limit case $\alpha=2$, it is well known that
one single spectrum is insufficient for completely determine the potential, and only the
symmetric part can be recovered \cite{LowePilantRundell:1992}. This is corroborated by
the dramatic increase of the condition number of the Jacobian $F'(\Lambda_{N,0},0)$, cf.
Table \ref{tab:cond}: the conditioning of the stacked Jacobian $\mathbf{J}$ worsens
steadily as the exponent $\alpha$ approaches $2$ as expected since it must be singular
for $\alpha=2$. The ill-conditioning is inherently related to lack of information and is
particularly relevant for noisy data. There are several possible remedies. The most
straightforward one is to use a smaller number of basis functions which directly leads to a
better conditioned Jacobian matrix. This strategy has proved very effective in our
experiments. Another very natural approach would be to employ regularization techniques,
e.g., Tikhonov regularization and truncated singular value decomposition. Perhaps
surprisingly, these were not able to produce better results and sometimes resulted in
inferior ones.

\begin{table}
\centering \caption{Condition number of the stacked Jacobian matrix $\mathbf{J}$.}
{\small
  \begin{tabular}{|l|ccccccccc|}
  \hline
  \qquad$\alpha$     & $1.02$ & $1.1$  & $6/5$  & $4/3$  & $3/2$  & $5/3$  & $7/4$  & $9/5$  & 1.85  \\
  \hline
  $N=M=5$        &5.313e0 & 6.393e0& 8.302e0& 1.236e1& 2.095e1&2.716e2 & 2.573e3& 4.136e3& 1.193e4\\
  $N=M=8$        &8.267e0 & 1.054e1& 1.479e1& 2.439e1& 4.608e1&6.109e2 & 8.767e3& 3.397e5& 1.114e5\\
  $N=M=10$       &1.049e1 & 1.399e1& 2.082e1& 3.743e1& 7.978e1&1.043e3 & 1.110e4& 3.298e5& 3.711e5\\
  $N=M=15$       &1.656e1 & 2.411e1& 4.037e1& 8.586e1& 2.299e2&3.998e3 & 8.524e4& 1.345e6& 1.895e7\\
  \hline
  \end{tabular}\label{tab:cond}
  }
\end{table}

Similar observations can be drawn from the numerical results for the discontinuous
potential $q_2$; see the right column of Figure~\ref{fig:recon}. The reconstruction in
case of $\alpha=\frac{3}{2}$ (or in fact, any lesser value of $\alpha$) is very close to
the best Fourier approximation of the exact potential, and given our basis representation
approach, it represents an optimal reconstruction. For larger $\alpha$, and strongly so
as $\alpha$ approaches $\alpha=2$, the higher-order Fourier coefficients needed for
faithfully capturing the potential $q_2$ are less well resolved and hence the
reconstructions are less accurate. In comparison with the smooth potential, the onset of
deterioration also occurs for a smaller $\alpha$ value.

These empirical observations strongly indicate that the eigenvalues of the fractional SLP contain
more information than that of the standard second-order SLP, especially for $\alpha$ values
sufficiently away from $2$. This might be attributed to the fact that the eigenvalues of the
fractional SLP are genuinely complex, rather than real as for the classical SLP.
Similar phenomena have been observed previously in related problems in second-order SLPs and
first-order systems. In \cite{RundellSacks:2004} the authors demonstrated the unique
identifiability of the potential from one complex eigenvalue sequence for the second-order
SLP with a complex impedance boundary condition. Yamamoto
\cite{Yamamoto:1988} showed that two sequences
of (complex) eigenvalues of a certain first-order matrix operator uniquely determine two unknown
coefficients in the operator.

Lastly, we contrast the experimentally observed uniqueness for the fractional SLP with
the classical $\alpha=2$ in the linearized case. It is tempting to attribute the very
different behavior of the fractional case $1<\alpha<2$ from the classical $\alpha=2$ to
the ``anomalous diffusion'' assumption. But the behavior for $\alpha=2$ might be viewed
as the unusual circumstance. To this end, we let $q_1$ and $q_2$ be two potentials, and
$\{\lambda_n\}$ and $\{\mu_n\}$ be respective Dirichlet spectrum, and $\{u_n\}$ and
$\{v_n\}$ eigenfunctions. Then
\begin{equation*}
 \int_0^1 (q_1-q_2)u_nv_n\,dx = (\lambda_n-\mu_n)\int_0^1 u_nv_n\,dx.
\end{equation*}
If the potentials have the same spectrum, then
\begin{equation*}
\int_0^1 (q_1-q_2)u_nv_n\,dx = 0 \quad\forall n.
\end{equation*}
Now in the case $\alpha=2$, the eigenfunctions $ \{u_n\}$ behave asymptotically like
$\frac{1}{\sqrt{\lambda_n}}\sin(\sqrt{\lambda_n}x)+O(\frac{1}{\sqrt{\lambda_n}})$ and
$\sqrt{\lambda_n} = n\pi + O(\frac{1}{n})$, and similarly $\{v_n\}$ and $\mu_n$
\cite[Chap. 3]{ChadanColtonPaivrintaRundell:1997}. Meanwhile, the asymptotic expansion
implies that the mean values of the potentials $q_1$ and $q_2$ are identical. Hence, the
identity $\sin^2n\pi x=\frac{1}{2}(1-\cos2n\pi x)$ implies that to leading order, there
holds
\begin{equation*}
  \int_0^1 (q_1-q_2)\cos(2n\pi x)\,dx = 0\quad \forall n.
\end{equation*}
Therefore, to leading order, the even part of the potential difference $q_1-q_2$ is zero;
that is the Dirichlet spectrum determines the even part of the difference, but equally,
gives no information on the odd part. A refinement of this idea leads to the original
uniqueness proof due to Borg \cite{Borg:1946} of both the two spectrum problem and the
case when the potential is known to be symmetric.

However, this proof and its consequences rely on the fact that the set of squares of the
eigenfunctions $\{u_n\}$ only spans effectively ``one half'' of the space $L^2(0,1)$,
upon ignoring the small errors in the procedure, and this is due entirely to the
trigonometric identity relating the squares of eigenfunctions of order $n$ to
eigenfunctions of order $2n$. In the case $1<\alpha<2$, even for $q=0$, we have no such
identity relating $u_n^2$ to another single eigenfunction; in fact we would expect an
expression of the form $u_n^2(x) = \sum_{k=1}^\infty a_k u_k(x)$ where none of the
complex-valued sequence $\{a_k\}$ vanishes. Looked at in this light, the anomalous
behavior is in fact exhibited by the case $\alpha=2$.

\begin{figure}
  \centering
  \begin{tabular}{cc}
    \includegraphics[trim = 20mm 2mm 20mm 2mm, clip, width=.5\textwidth]{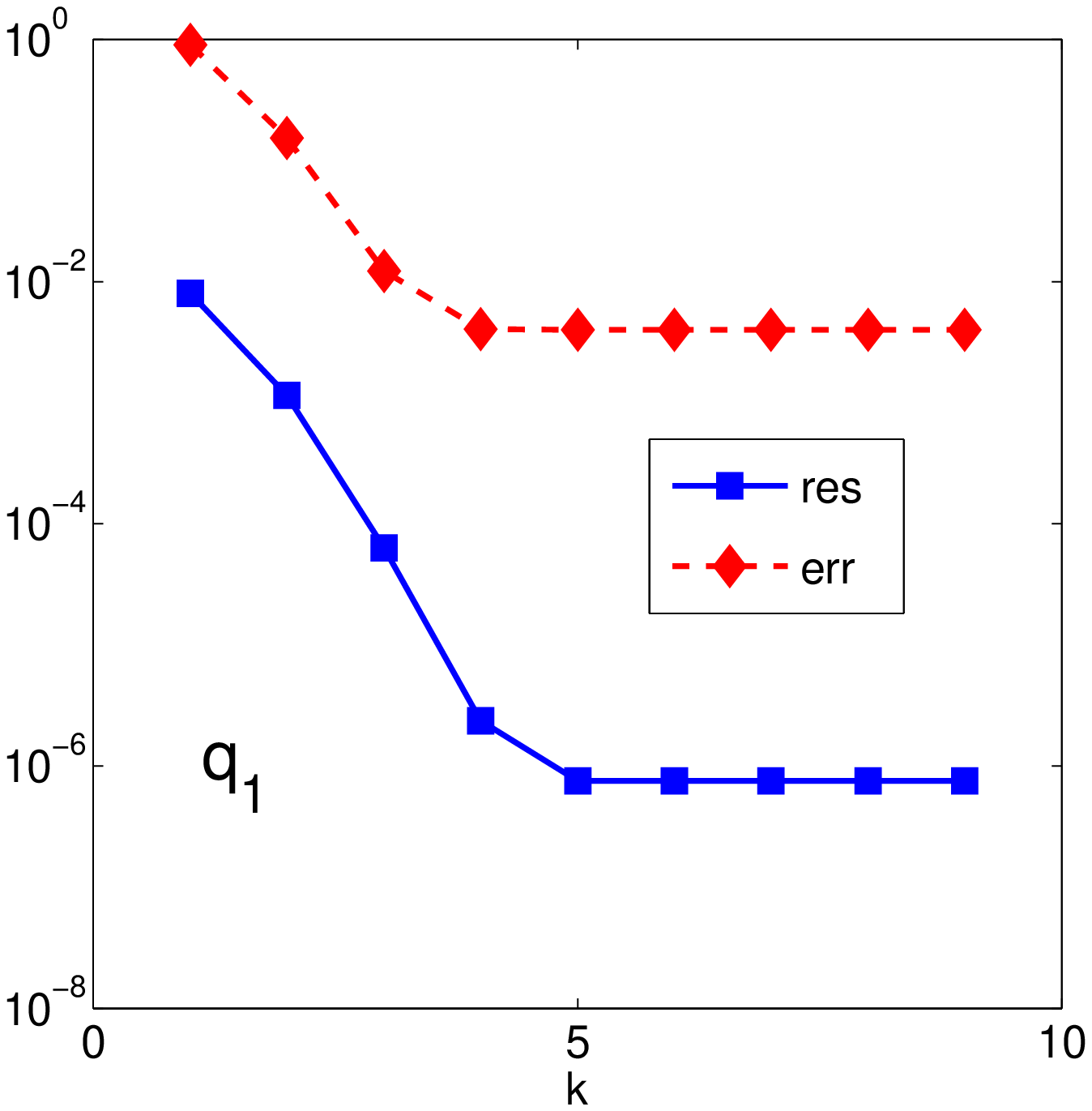} &
    \includegraphics[trim = 20mm 2mm 20mm 2mm, clip, width=.5\textwidth]{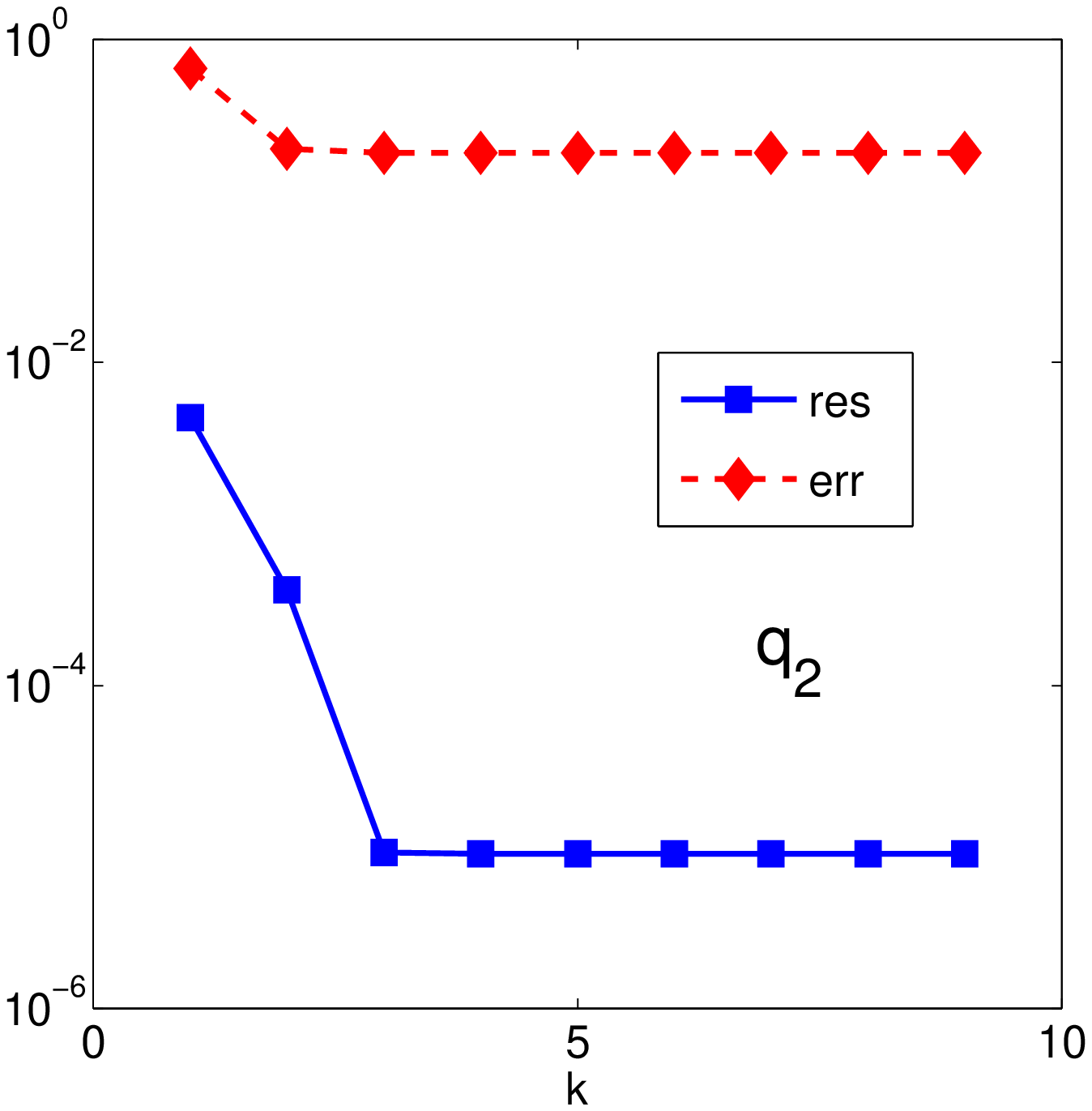}
    %(a) $q_1$ & (b) $q_2$
  \end{tabular}
  \parbox{5in}{\caption{Convergence of the frozen Newton method for $\alpha=3/2$. Here $res$ and $err$ refer
  to the residual $\|F(q,\Lambda_N)\|$ and the error $e$, respectively.}\label{fig:conv}}
\end{figure}

\section{Concluding remarks}\label{sec:concl}
We have presented a first numerical study of the forward and inverse Sturm-Liouville
problems for fractional differential operators. The numerical results indicate a stunning
observation: one single spectrum can uniquely determine a general potential, and a few
(finite) spectral data allow very good reconstructions, provided that the fractional
order $\alpha\in(1,2)$ is sufficiently less than $2$.

These experimental observations lead to a number of interesting conjectures for the
fractional Sturm-Liouville problem:
\begin{itemize}
  \item[(a)] All the eigenvalues of the operator $-D_0^\alpha$ ($\alpha\in(1,2)$) are
      simple.
  \item[(b)] The (Dirichlet) eigenvalues of $-D_0^\alpha+q(x)\ (1<\alpha<2)$ obey the
      relation \eqref{eqn:decay} with $\{c_n\}\in\ell^2$.
  \item[(c)] For any $\alpha\in(1,2)$, one single (Dirichlet) spectrum completely
      determines the potential $q$ in the fractional Sturm-Liouville problem
      \eqref{eqn:slp}.
  \item[(d)] While (c) is the optimal result, a weaker version might be more
      tractable: to show the injectivity of the linearized map.
  \item[(e)] There are clear extensions to other boundary conditions and also the
      case of the half line.
\end{itemize}

Apart from these theoretical questions, the rigorous analysis of relevant numerical
schemes is also of immense interest. These include error estimates for the
eigenvalue approximations, convergence and convergence rates of the quasi-Newton
scheme etc. A better understanding of the analytical aspects of the fractional
Sturm-Liouville problem is crucial for addressing these numerical issues.

\section*{Acknowledgements}
This work is supported by Award No. KUS-C1-016-04, made by King Abdullah University of
Science and Technology (KAUST), and NSF award DMS-0715060.

\appendix
\section{Predictor-corrector algorithm}\label{sec:app}
Here we describe a predictor-corrector algorithm for fractional initial value problems
based on the idea in the work \cite{DiethelmFordFreed:2002}. There are several errors in
the formulas in \cite{DiethelmFordFreed:2002}, in, e.g. eq. (15). Hence we present
necessary details here. Consider the initial value problem (with $\alpha\in(1,2))$
\begin{equation*}\left\{
  \begin{aligned}
   & D_0^\alpha u = f(x,u(x))\quad 0<x<1,\\
   & u(0)= u_0,\quad u'(0) = u_0^{(1)}.
  \end{aligned}\right.
\end{equation*}

This initial value problem is equivalent to the Volterra integral equation
\cite[eq.~(3.1.41), pp.~141]{KilbasSrivastavaTrujillo:2006}
\begin{equation}\label{eqn:volterra}
   u(x) = u_0 + u_0^{(1)}x + \frac{1}{\Gamma(\alpha)}\int_0^x(x-t)^{\alpha-1}f(t,u(t))dt.
\end{equation}
To solve equation \eqref{eqn:volterra}, we partition the unit interval $[0,1]$ into a
uniform mesh $\mathcal{T}=\{x_k=kh, k=0,1,\ldots,K\}$, $K\in\mathbb{N}$, with a mesh size
$h=\frac{1}{K}$. The predictor-corrector algorithm in \cite{DiethelmFordFreed:2002}
extends the Adams-Bashforth-Moulton method. The predictor step applies the left rectangle
rule (with the kernel $(x_{k+1}-x)^{\alpha-1}$ being the weight) to the integral in
\eqref{eqn:volterra}, i.e.,
\begin{equation*}
  \int_0^{x_{k+1}}(x_{k+1}-x)^{\alpha-1}g(x)dx \approx \frac{h^\alpha}{\alpha}\sum_{j=0}^k b_{j,k+1}g(x_j),
\end{equation*}
where the integrand $g$ is continuous and the weights $b_{j,k+1}$ are given by
\begin{equation*}
  b_{j,k+1} = (k+1-j)^\alpha-(k-j)^\alpha,\quad j=0,1,\ldots, k.
\end{equation*}
The corrector step applies the trapezoidal rule (again with the kernel $(x_{k+1}-x)^{\alpha-1}$
being the weight) to the integral in \eqref{eqn:volterra}, i.e.,
\begin{equation*}
    \int_0^{x_{k+1}}(x_{k+1}-x)^{\alpha-1}g(x)dx \approx \frac{h^\alpha}{(\alpha+1)\alpha}\sum_{j=0}^{k+1}a_{j,k+1}g(x_j),
\end{equation*}
where the weights $a_{j,k+1}$ are given by
\begin{equation*}
  a_{j,k+1}=\left\{\begin{array}{ll}
    k^{\alpha+1}-(k-\alpha)(k+1)^\alpha, & \mbox{if } j=0,\\
    (k-j+2)^{\alpha+1}+(k-j)^{\alpha+1} - 2(k-j+1)^{\alpha+1}, &\mbox{if } j= 1,\ldots,k,\\
    1, & \mbox{if } j = k+1.
  \end{array}\right.
\end{equation*}

The predictor-corrector algorithm applies repeatedly the above two quadrature rules to get the
approximations $u_h^\mathrm{p}(x_{k+1})$ and $u_h^\mathrm{c}(x_{k+1})$ respectively by
\begin{equation*}
  \begin{aligned}
   u_h^\mathrm{p}(x_{k+1})&=u_0+u_0^{(1)}x_{k+1} + \frac{h^\alpha}{\Gamma(1+\alpha)}\sum_{j=0}^kb_{j,k+1}f(x_j,u_h(x_j)),\\
   u_h^\mathrm{c}(x_{k+1})&= u_0+u_0^{(1)}x_{k+1} + \frac{h^\alpha}{\Gamma(2+\alpha)}f(x_{k+1},u_h^\mathrm{p}(x_{k+1})
     + \frac{h^\alpha}{\Gamma(2+\alpha)}\sum_{j=0}^ka_{j,k+1}f(x_{j},u_h(x_j)).
  \end{aligned}
\end{equation*}
Then the sought-for approximation $u_h(x_{k+1})$ is obtained by re-evaluate the corrector
step with $u_h^\mathrm{c}(x_{k+1})$ in place of $u_h^\mathrm{p}(x_{k+1})$. To further
improve the accuracy of the approximation $\tilde{u}_h$, we adopt a Richardson
extrapolation step, which reads $\tilde{u}_h = \frac{1}{3}(4u_{\frac{h}{2}}-u_h)$. The
solver for the initial value problem always incorporates this extrapolation step.

\section{Quasi-Newton method for eigenvalue problem}\label{sec:newton}

Next we describe a quasi-Newton method for finding an eigenvalue $\lambda$ to the SLP
\eqref{eqn:slp}. The starting point of the method is the following obvious observation:
any eigenvalue $\lambda$ to the SLP \eqref{eqn:slp} is one root of the nonlinear map from
the potential $q$ to $u(q,\lambda)(1)$, where $u(q,\lambda)$ is the solution to the
initial value problem \eqref{eqn:ivp}. Clearly, then the solution $u(q,\lambda)$ will
also be the respective eigenfunction.

We shall find the eigenvalues to the SLP \eqref{eqn:slp} by a quasi-Newton method, the
secant method. The complete algorithm is listed in Algorithm \ref{alg:qneig}. The
stopping criterion at Step 8 can be based on monitoring the quasi-Newton update
$\delta\lambda$: if the increment $|\delta\lambda|$ falls below a given tolerance
(which is set to $1.0\times10^{-12}$ in our computation), then  the algorithm is
terminated. Accurate initial guesses for the SLP \eqref{eqn:slp} with zero potential can
be obtained from the zeros of Mittag-Leffler function $E_{\alpha,2}(-\lambda)$, which can
be directly extracted by visualizing the function $E_{\alpha,2}(-\lambda)$
\cite{SeyboldHilfer:2008}. That for a nonzero potential can be obtained by adding the
integral $\int_0^1q(t)dt$ to the eigenvalues for zero potential. In practice, including a
small imaginary part in the initial guesses is beneficial. Each iteration of the
algorithm invokes solving one initial value problem, which is dominant in the expense.
Our numerical experiences indicate that its convergence appears always superlinear, and
the convergence basin is relatively large.

\begin{algorithm}
  \caption{Quasi-Newton method for eigenvalue problem \eqref{eqn:slp}.}\label{alg:qneig}
  \begin{algorithmic}[1]
    \STATE Given two initial guesses $\lambda^0$ and $\lambda^1$;
    \STATE Calculate $u(\lambda^0,q)(1)$ and $u(\lambda^1,q)(1)$ by the
           predictor-corrector method;
    \FOR {$k=1,\dots,K$}
        \STATE Compute derivative $\partial_\lambda u(\lambda^k,q)(1)$ by finite difference
            \begin{equation*}
               \partial_\lambda u(\lambda^k,q)(1)\approx \frac{u(\lambda^{k},q)(1)-u(\lambda^{k-1},\lambda)(1)}{\lambda^{k}-\lambda^{k-1}};
            \end{equation*}
        \STATE Compute the quasi-Newton update $\delta\lambda$ by
            \begin{equation*}
               \delta\lambda = -\frac{u(\lambda^k,q)(1)}{\partial_\lambda u(\lambda^k,q)(1)};
            \end{equation*}
        \STATE Update the eigenvalue by $\lambda^{k+1}=\lambda^k+\delta\lambda$;
        \STATE Compute $u(\lambda^{k+1},q)(1)$ by predictor-corrector method;
        \STATE Check stopping criterion.
    \ENDFOR
    \STATE {\textbf{output}} approximate eigenvalue $\lambda^K$ and eigenfunction $u(\lambda^K,q)$.
  \end{algorithmic}
\end{algorithm}

\bibliographystyle{abbrv}
\bibliography{frac}
\end{document}